\newcommand{\im}{{\rm im}\:}
\newcommand{\Q}{{\mathbb Q}}
\newcommand{\Gal}{\mathrm{Gal}}
\newcommand{\Aut}{\mathrm{Aut}}
\newtheorem{thm}{Theorem}[section]
\newtheorem{lemma}[thm]{Lemma}
\newtheorem{prop}[thm]{Proposition}
\newtheorem{cor}[thm]{Corollary}
\begin{document}
\title[Bounded generation]{Non-virtually abelian anisotropic linear groups are not boundedly generated}

\author[P.~Corvaja]{Pietro Corvaja}
\author[A.~Rapinchuk]{Andrei S. Rapinchuk}
\author[J.~Ren]{Jinbo Ren}
\author[U.~Zannier]{Umberto M. Zannier}

\address{Dipartimento di Scienze Matematiche, Informatiche e Fisiche, via delle Scienze,
206, 33100 Udine, Italy}

\email{pietro.corvaja@uniud.it}

\address{Department of Mathematics, University of Virginia,
Charlottesville, VA 22904-4137, USA}

\email{asr3x@virginia.edu}

\address{Department of Mathematics, University of Virginia (formerly affiliated),
Charlottesville, VA 22904-4137, USA}

\email{renjinbomath@gmail.com}

\address{Scuola Normale Superiore, Piazza dei Cavalieri, 7, 56126 Pisa, Italy}

\email{umberto.zannier@sns.it}

\thanks{2010 {\it Mathematics Subject Classification.} Primary 11F06, Secondary 11D72.}

\begin{abstract}
We prove that if a linear group $\Gamma \subset \mathrm{GL}_n(K)$ over a field $K$ of characteristic zero is boundedly generated by semi-simple
(diagonalizable) elements then it is virtually solvable. As a consequence, one obtains that infinite $S$-arithmetic subgroups of absolutely almost simple anisotropic algebraic groups over number fields are {\it never} boundedly generated. Our proof relies on Laurent's theorem from Diophantine geometry and properties of generic elements.

\end{abstract}

\maketitle

\section{Introduction}

An abstract group $\Gamma$ has the property of {\bf bounded generation} (BG) if there exist elements $\gamma_1, \ldots , \gamma_r \in \Gamma$ (not necessarily distinct) such that
\begin{equation}\tag{BG} \label{E:BG}
\Gamma = \langle \gamma_1 \rangle \cdots \langle \gamma_r \rangle,
\end{equation}
where $\langle \gamma \rangle$ denotes the cyclic subgroup of $\Gamma$ generated by an element $\gamma \in \Gamma$. (In this case, we also say that $\Gamma$ is {\bf boundedly generated} by $\gamma_1, \ldots , \gamma_r$.) Obviously, every group satisfying (BG) is finitely generated. Conversely, every finitely generated (virtually) abelian, or more generally (virtually) nilpotent, group has bounded generation - note that such a group is also linear. On the other hand, the fact that there exist {\bf non-virtually solvable} linear groups having (BG) is rather nontrivial. Historically, the first examples emerged as a consequence of the result of D.~Carter and G.~Keller \cite{CK} that for any $n \geq 3$ and any ring $\mathcal{O}$ of algebraic integers, every unimodular $(n \times n)$-matrix with entries in $\mathcal{O}$  is a product of a {\bf bounded} number (with a bound depending on $n$ and the discriminant of $\mathcal{O}$) of elementary matrices over $\mathcal{O}$. Indeed, this fact immediately implies the existence for $\Gamma = \mathrm{SL}_n(\mathcal{O})$ of a presentation (\ref{E:BG}) in which the $\gamma_i$'s are suitable elementary (in particular, unipotent) matrices. Although $\mathrm{SL}_2(\mathcal{O})$ does not have (BG) when $\mathcal{O}$ is either $\mathbb{Z}$ or the ring of integers of an imaginary quadratic field\footnote{This follows from the fact that the group $\mathrm{SL}_2(\mathcal{O})$ is virtually free in the first case and virtually has a nonabelian free quotient in the second \cite{GrSch}.}, it does have (BG) in all other arithmetic situations, i.e. when $\mathcal{O}$ is the ring of $S$-integers of a number field with infinite unit group $\mathcal{O}^{\times}$. Again, this is derived from the fact that in the situation at hand every matrix in $\mathrm{SL}_2(\mathcal{O})$ is a product of at most 9 elementary matrices \cite{MRS}, and then the resulting presentation (BG) involves a mix of unipotent and semi-simple (diagonalizable) matrices, with the unipotent ones definitely present. The result of \cite{CK} was extended by O.I.~Tavgen \cite{Tav} to all Chevalley groups of rank $> 1$, and also to most quasi-split groups. Bounded generation of $S$-arithmetic subgroups in isotropic, but not necessarily quasi-split, orthogonal groups of quadratic forms over number fields was established (under some natural assumptions) in \cite{ER}.  These (and some other) results generated the expectation that  higher rank $S$-arithmetic subgroups of absolutely almost simple algebraic groups over number fields should have (BG).

To put this problem into perspective, we recall that while being a purely combinatorial property of groups, bounded generation has a number of interesting consequences and applications in different areas. First, a group having (BG) in which every finite index subgroup has finite abelianization is {\bf $SS$-rigid}, i.e. has only finitely many inequivalent completely reducible complex representations in each dimension \cite[Appendix A]{PR}. Second, it was shown in \cite{Lub} and \cite{PR-CSP} that $S$-arithmetic subgroups of absolutely almost simple algebraic groups that have (BG) under some natural assumptions possess the {\bf congruence subgroup property} in the sense that the corresponding congruence kernel is finite. Third, property (BG) played a crucial role in the proof of the Margulis-Zimmer conjecture on  commensurated subgroups for higher rank $S$-arithmetic subgroups of Chevalley groups \cite{ShWil}, estimation of Kazhdan constants \cite{Kas}, \cite{Shal} and other situations. For the sake of completeness, we also mention that the natural analog of (BG) is of significance in the theory of profinite groups; in particular, the pro-$p$ groups with bounded generation are precisely the $p$-adic analytic groups \cite{Dixon}. Thus, bounded generation has long been regarded as an abstract property that can potentially provide a uniform approach to some important problems for $S$-arithmetic subgroups including  Serre's congruence subgroup and the Margulis-Zimmer conjectures, and also explain some rigidity phenomena. This would be particularly valuable in the  anisotropic case where many results involving arithmetic groups often rely on {\it ad hoc} techniques. We recall that a semi-simple algebraic group $G$ over a field $K$ of characteristic zero is $K$-anisotropic (i.e., has $K$-rank zero) if and only if the group $G(K)$ contains no nontrivial unipotent elements \cite{BT},  hence consists entirely of semi-simple elements. Building on this characterization, we will  call {\bf anisotropic} any  subgroup $\Gamma \subset \mathrm{GL}_n(K)$ (in other words, any linear group) that contains only semi-simple elements. It should be noted that while quite a few examples of $S$-arithmetic subgroups of absolutely almost simple algebraic groups that possess (BG) have been found over the years, none of these was anisotropic, which brings us to the following.

\vskip2mm

\noindent {\bf Question A.} {\it Can $(\mathrm{BG})$ possibly hold for an infinite $S$-arithmetic subgroup of  an anisotropic absolutely almost simple algebraic group?}

\vskip2mm

To approach this question, one may first try to re-examine the nature of presentations (\ref{E:BG}) that arise in the known examples of boundedly generated groups. As we pointed out above, in the presentations (\ref{E:BG}) for $\mathrm{SL}_n$, $n \geq 2$, over the rings of algebraic $S$-integers in appropriate situations that are derived from bounded generation of these groups by elementary matrices, some or even all elements $\gamma_i$ are unipotent. So, one may wonder if in these (or some other) examples one can produce a presentation (\ref{E:BG}) with all the $\gamma_i$'s being semi-simple -- in which case we would say that $\Gamma$ is boundedly generated by semi-simple elements. Along these lines, one can ask the following general question which in a way subsumes Question A.

\vskip2mm

\noindent {\bf Question B.} {\it Which linear groups  are boundedly generated by semi-simple elements?}

\vskip2mm

The goal of this paper is to give, in the case of arbitrary linear groups over a field of characteristic zero, a powerful necessary condition for bounded generation by semi-simple elements. This condition, in particular, leads to a negative answer to Question A.
\begin{thm}\label{mainthm}
Let $\Gamma \subset \mathrm{GL}_n(K)$ be a linear group over a field $K$ of characteristic zero, which is not virtually solvable. Then in any possible presentation {\rm (\ref{E:BG})} for $\Gamma$ at least two of the elements $\gamma_i$ must be non-semi-simple. In particular, a linear group over a field of characteristic zero boundedly generated by semi-simple elements is virtually solvable.
\end{thm}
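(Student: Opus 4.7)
I argue by contradiction: assume that $\Gamma$ is not virtually solvable but admits a presentation (\ref{E:BG}) in which at most one generator $\gamma_i$ is non-semi-simple. I first describe the argument in the main case where every $\gamma_i$ is semi-simple; the case of a single exceptional non-semi-simple generator will be handled by a variation on the same scheme at the end.

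After standard reductions---passing to a finitely generated subfield of $K$ (embeddable in $\overline{\mathbb{Q}}$), then to the identity component of the Zariski closure of $\Gamma$, and quotienting by the solvable radical, under which semi-simplicity of elements is preserved---I may assume $\Gamma$ is Zariski-dense in a connected semi-simple $K$-group $G \neq \{e\}$ and each semi-simple $\gamma_i$ lies in a maximal torus $T_i \subset G$. Put $\mathbf{T} = T_1 \times \cdots \times T_r$, an algebraic torus, and $\Lambda = \langle \gamma_1 \rangle \times \cdots \times \langle \gamma_r \rangle \subset \mathbf{T}(K)$, a finitely generated subgroup. The product morphism $\mu : \mathbf{T} \to G$, $(t_1, \ldots, t_r) \mapsto t_1 \cdots t_r$, satisfies $\mu(\Lambda) = \Gamma$ by (\ref{E:BG}) and is dominant since $\Gamma$ is Zariski-dense.

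Next, I invoke a genericity theorem in the spirit of Prasad--Rapinchuk to produce an element $\delta \in \Gamma$ of infinite order whose centralizer is a maximal torus $T_\delta \subset G$ in generic position relative to $T_1, \ldots, T_r$ (in particular, sharing no positive-dimensional subtorus with any $T_i$). The key step is then an application of Laurent's theorem to the closed subvariety $V = \mu^{-1}(T_\delta) \subset \mathbf{T}$ and the subgroup $\Lambda$: the intersection $V \cap \Lambda$ is a finite union of cosets of subgroups of $\Lambda$, each contained in $V$. Since $\langle \delta \rangle \subset T_\delta(K) \cap \Gamma = \mu(V \cap \Lambda)$ is infinite, $V \cap \Lambda$ is itself infinite, so at least one of the subgroups involved has positive rank. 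Passing to Zariski closures yields a positive-dimensional subtorus $\overline{H} \subset \mathbf{T}$ and $\lambda \in \Lambda$ such that $\mu(\lambda \cdot \overline{H}) \subset T_\delta$: a positive-dimensional algebraic family of products from the tori $T_i$ remains inside the single torus $T_\delta$.

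The main obstacle is converting this algebraic constraint into a contradiction with the genericity of $T_\delta$. The difficulty is that $\mu$ is \emph{not} a group homomorphism, since the tori $T_i$ need not commute inside $G$, so one cannot simply push $\overline{H}$ forward to an algebraic subgroup of $T_\delta$. Instead one must analyze $\mu$ infinitesimally along $\overline{H}$, using the root-system geometry of $G$ together with a sufficiently strong genericity hypothesis on $\delta$ (perhaps iterating over a Zariski-dense family of such generic elements), to conclude that $\overline{H}$ must be trivial---contradicting the infinite order of $\delta$. The case of a single non-semi-simple generator $\gamma_1$ is reduced to the all-semi-simple case by writing $\Gamma = \bigcup_{n_1 \in \mathbb{Z}} \gamma_1^{n_1} \cdot \Gamma'$ with $\Gamma' = \langle \gamma_2 \rangle \cdots \langle \gamma_r \rangle$, using Zariski-density of $\Gamma$ and irreducibility of $G$ to find a Zariski-dense translate of $\Gamma'$, and then running the core argument on the reduced product map.
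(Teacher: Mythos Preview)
Your proposal has the right architecture---reduction, generic element, Laurent---but there is a genuine gap at exactly the point you yourself flag, and the paper closes it by a move you have not found.

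First, two smaller issues. Your reduction ``passing to a finitely generated subfield of $K$ embeddable in $\overline{\mathbb{Q}}$'' is false as stated: a finitely generated extension of $\mathbb{Q}$ may have positive transcendence degree. The paper handles this via a nontrivial specialization argument (choosing a $\mathbb{Q}$-algebra homomorphism $R \to F$ to a number field that preserves both non-virtual-solvability and semi-simplicity of the $\gamma_i$). Second, your treatment of the single non-semi-simple generator does not work: finding a Zariski-dense translate $\gamma_1^{n_1}\Gamma'$ does not reduce the problem, since $\Gamma'$ is only a set, not a group, and Zariski-density of a translate says nothing about whether $\langle\delta\rangle$ meets it in infinitely many powers. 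The paper instead takes the Jordan decomposition $\gamma_s = \sigma\upsilon$ and handles the unipotent factor $\upsilon$ by writing $\upsilon^m$ as a matrix polynomial in $m$, then eliminating the extra integer variable $m$ via a resultant.

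The main gap is the one you acknowledge: you obtain a positive-dimensional subtorus $\overline{H}\subset \mathbf{T}=T_1\times\cdots\times T_r$ with $\mu(\lambda\overline{H})\subset T_\delta$, but you cannot turn this into a contradiction because $\mu$ is not a homomorphism. Your suggested remedies (infinitesimal analysis, iterating over generic $\delta$) are speculative; ``$T_\delta$ shares no positive-dimensional subtorus with any $T_i$'' is not the right genericity condition, and it is unclear that any root-system argument rescues this. The paper sidesteps the non-commutativity entirely by \emph{linearizing in eigenvalue coordinates}. Concretely: diagonalize each $\gamma_i$ and $\gamma$ separately, so that one entry of $g^{-1}\gamma^m g$ equals a fixed polynomial $p$ in the tuple $(\lambda_{ij}^{a_i(m)})$; set $f(x,x_{ij})=x-p(x_{ij})$ and apply Laurent not to $\mathbf{T}\subset G^r$ but to the \emph{split} torus $(\mathbb{G}_m)^{1+rn}$ with coordinates $(x,x_{ij})$. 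The subgroups produced by Laurent are now described by explicit characters $x^k\prod x_{ij}^{k_{ij}}$, and the key observation is that $f$ is affine in $x$, so no such subgroup can have free first coordinate. Hence some character with $k\neq 0$ vanishes on two distinct points of $\Sigma$, forcing $\lambda^{k(m_2-m_1)}\in\langle\lambda_{ij}\rangle$. This is killed by choosing $\gamma$ so that $\langle\lambda\rangle\cap\Lambda(\gamma_1)\cdots\Lambda(\gamma_r)=\{1\}$---an arithmetic ``multiplicative independence of eigenvalues'' condition, strictly sharper than your torus-disjointness, and supplied by the Prasad--Rapinchuk machinery applied over the field generated by all the $\lambda_{ij}$. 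That eigenvalue-level independence is the missing idea; without it, your Step 4 remains a hope rather than a proof.
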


There exist virtually solvable finitely generated linear groups that do not admit bounded generation by any elements - semi-simple or not (see Example \ref{zhujie}.1), so Theorem \ref{mainthm} is {\bf not} a criterion. However, it yields the following criterion in the case of anisotropic groups.

\begin{cor}\label{C:1}
An anisotropic linear group $\Gamma \subset \mathrm{GL}_n(K)$ over a field of characteristic zero has {\rm (BG)} if and only if it is finitely generated and virtually abelian.
\end{cor}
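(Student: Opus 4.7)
\textbf{Proof plan for Corollary \ref{C:1}.} The reverse implication is standard and is already noted in the introduction: any finitely generated virtually abelian group satisfies (BG). For the forward implication, my plan is to apply Theorem \ref{mainthm} directly and then carry out a short structural argument at the level of Zariski closures.

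Suppose $\Gamma \subset \mathrm{GL}_n(K)$ is anisotropic and admits a presentation (\ref{E:BG}). Then $\Gamma$ is automatically finitely generated, and every generator $\gamma_i \in \Gamma$ is semi-simple by the anisotropy assumption. Theorem \ref{mainthm} therefore yields a finite-index solvable subgroup $\Gamma_0 \subset \Gamma$; this $\Gamma_0$ is itself finitely generated and consists entirely of semi-simple elements. The remaining task is to upgrade ``virtually solvable'' to ``virtually abelian'' under the anisotropy hypothesis.

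For this step I would pass to the Zariski closure $G := \overline{\Gamma_0} \subset \mathrm{GL}_n(\bar{K})$, which is a solvable algebraic group. Let $U$ denote the unipotent radical of the identity component $G^{\circ}$; since $U$ is characteristic in $G^{\circ}$ and $G^{\circ}$ is normal in $G$, it is normal in $G$. By the standard structure theory of connected solvable linear algebraic groups in characteristic zero, $G^{\circ}/U$ is a torus. Consequently $G/U$ is an extension of the finite group $G/G^{\circ}$ by the torus $G^{\circ}/U$, and is in particular virtually abelian. The key use of anisotropy now enters in a single clean observation: every element of $\Gamma_0 \cap U$ is simultaneously semi-simple and unipotent, hence equal to $1$. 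Thus the canonical surjection $G \twoheadrightarrow G/U$ restricts to an injection $\Gamma_0 \hookrightarrow G/U$, so $\Gamma_0$ is virtually abelian, and hence so is $\Gamma$.

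The substantive mathematical content of the corollary is entirely absorbed into Theorem \ref{mainthm}; everything that follows it is routine structure theory of linear algebraic groups. The only mildly delicate point in the reduction is the identity $\Gamma_0 \cap U = \{1\}$, which is precisely what allows the abstract semi-simplicity hypothesis on $\Gamma$ to interact with the algebraic-geometric structure of its Zariski closure.
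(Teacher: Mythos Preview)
Your proof is correct and follows essentially the same route as the paper's: apply Theorem \ref{mainthm} to get virtual solvability, then use that anisotropy forces $\Gamma$ (or your $\Gamma_0$) to meet the unipotent radical $U$ of $G^{\circ}$ trivially, so it injects into the torus-by-finite quotient. The only cosmetic difference is that the paper works directly with $\Gamma \cap G^{\circ}$ (which already injects into the torus $G^{\circ}/U$ and is therefore abelian on the nose), whereas you first pass to a solvable $\Gamma_0$ and conclude it is virtually abelian; both are equally valid.
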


To formulate our result for $S$-arithmetic groups, we need to introduce one additional notation. Let $G$ be a linear algebraic group defined over a number field $K$, and let $S$ be a finite set of valuations of $K$ containing all archimedean ones. We set
$$
G_S := \prod_{v \in S} G(K_v),
$$
where $K_v$ denotes the completion of $K$ with respect to $v$, and recall that for $G$ semi-simple, the non-compactness of $G_S$ is equivalent to the fact that the $S$-arithmetic subgroups of $G$ are infinite (cf. \cite[\S 5.4]{PR}).

\begin{thm}\label{zdlx}
Let $G$ be an algebraic group over a number field $K$, and let $S$ be a finite set of valuations of $K$ containing all archimedean ones. Assume that the quotient $G^{\circ}/R$ of the connected component of $G$ by its radical possesses a $K$-defined semi-simple $K$-anisotropic normal subgroup $H$ such that the group $H_S$ is non-compact (which automatically holds if $G^{\circ}$ itself possesses such a subgroup). Then the $S$-arithmetic subgroups of $G$ are \underline{not} boundedly generated. In particular, infinite $S$-arithmetic subgroups of absolutely almost simple $K$-anisotropic groups are \underline{not}  boundedly generated.
\end{thm}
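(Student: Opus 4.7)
The plan is to reduce the general case to Corollary~\ref{C:1}. Suppose for contradiction that some $S$-arithmetic subgroup $\Gamma \subset G(K)$ satisfies (BG). Because $(G/G^{\circ})(K)$ is finite, the intersection $\Gamma_0 := \Gamma \cap G^{\circ}(K)$ has finite index in $\Gamma$; using the standard fact that (BG) is inherited by finite-index subgroups, $\Gamma_0$ is an $S$-arithmetic subgroup of $G^{\circ}$ that again satisfies (BG).

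Next, since $G^{\circ}/R$ is semi-simple and $H$ is one of its $K$-defined normal subgroups, there exists a $K$-defined normal subgroup $H'$ of $G^{\circ}/R$ such that $G^{\circ}/R$ is the almost direct product of $H$ and $H'$. Composing the canonical maps $G^{\circ} \to G^{\circ}/R \to (G^{\circ}/R)/H' =: \bar{H}$ yields a $K$-defined surjective homomorphism $\bar{\pi}: G^{\circ} \to \bar{H}$, whose target is $K$-isogenous to $H$ and is therefore semi-simple and $K$-anisotropic, with $\bar{H}_S$ non-compact.

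Set $\bar{\Gamma} := \bar{\pi}(\Gamma_0) \subset \bar{H}(K)$. As a homomorphic image of a boundedly generated group, $\bar{\Gamma}$ inherits (BG). Since $\bar{H}$ is $K$-anisotropic semi-simple, every element of $\bar{H}(K)$, and hence of $\bar{\Gamma}$, is semi-simple \cite{BT}, so $\bar{\Gamma}$ is an anisotropic linear group in the sense of the introduction. By standard reduction theory (cf.\ \cite[Ch.~5]{PR}), $\bar{\Gamma}$ is commensurable with an $S$-arithmetic subgroup of $\bar{H}$; the non-compactness of $\bar{H}_S$ forces any such subgroup to be infinite, and by the Borel density theorem it is Zariski-dense in the positive-dimensional semi-simple group $\bar{H}$, so it cannot be virtually abelian. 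This contradicts Corollary~\ref{C:1}. The final assertion of the theorem, about infinite $S$-arithmetic subgroups of absolutely almost simple $K$-anisotropic $G$, is just the special case $H = G$.

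The real conceptual input is Corollary~\ref{C:1} (hence Theorem~\ref{mainthm}); the remaining ingredients -- inheritance of (BG) under finite-index inclusions, existence of the complementary factor $H'$ in the semi-simple $K$-group $G^{\circ}/R$, commensurability of $\bar{\pi}(\Gamma_0)$ with an $S$-arithmetic subgroup of $\bar{H}$, and Borel density -- are all standard. The most delicate among them is the passage of (BG) to finite-index subgroups: one handles it by replacing each $\gamma_i$ by $\gamma_i^n$ (where $n$ is the index of a normal subgroup contained in $\Gamma_0$), so that these powers lie in $\Gamma_0$, and absorbing the bounded coset-representative ``remainders'' that arise when dividing the exponents by $n$ into a slightly enlarged but still finite generating tuple of powers.
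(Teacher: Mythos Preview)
Your argument is correct and follows essentially the same route as the paper's own proof: reduce to the connected case, push the $S$-arithmetic group through the quotient by the radical and then onto a quotient capturing $H$, and finish with Corollary~\ref{C:1} combined with Borel density. The only point worth flagging is your application of Borel density to $\bar{H}$: as stated it requires that $\bar{H}$ have no $K$-simple factor that is compact at every place of $S$, which is not guaranteed by the mere non-compactness of $\bar{H}_S$. The paper sidesteps this by passing one step further to a $K$-\emph{simple} quotient $H'$ with $H'_S$ non-compact (via the adjoint group of $H$), where Borel density applies directly; alternatively, your conclusion that $\bar{\Gamma}$ is not virtually abelian still follows by projecting to such a factor, so the fix is immediate.
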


The notion of bounded generation has a profinite version. More precisely, a profinite group $\Delta$ has the property of bounded generation $(\mathrm{BG})_{\mathrm{pr}}$ as a profinite group if there exist elements $\delta_1, \ldots , \delta_r \in \Delta$ (not necessarily distinct) such that
$$
\Delta = \overline{\langle \delta_1 \rangle} \cdots \overline{\langle \delta_r \rangle},
$$
where $\overline{\langle \delta_i \rangle}$ is the closure of the cyclic subgroup generated by $\delta_i$. If a (discrete) group $\Gamma$ has property (BG), i.e. admits a factorization (BG) as above, then its profinite completion $\widehat{\Gamma}$ inherits the following factorization
$$
\widehat{\Gamma} = \overline{\langle \gamma_1 \rangle} \cdots \overline{\langle \gamma_r \rangle},
$$
and hence has property $(\mathrm{BG})_{\mathrm{pr}}$ of bounded generation as a profinite group. The question of whether the converse is true, i.e. whether $(\mathrm{BG})_{\mathrm{pr}}$ for $\widehat{\Gamma}$ implies $(\mathrm{BG})$ for $\Gamma$, remained open for a long time. Combining Theorem \ref{zdlx} with the known results on the congruence subgroup problem, one obtains the negative answer to this question.

\begin{cor}\label{negative}
There exist residually finite finitely generated groups $\Gamma$  that do {\bf not} have property $(\mathrm{BG})$ of bounded generation but whose profinite completion $\widehat{\Gamma}$ does have  property $(\mathrm{BG})_{\mathrm{pr}}$ of bounded generation as a profinite group.
\end{cor}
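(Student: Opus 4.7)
The plan is to produce $\Gamma$ as an $S$-arithmetic subgroup of a suitably chosen anisotropic absolutely almost simple $K$-group and to extract the two contrasting properties from Theorem \ref{zdlx} on one side, and from the congruence subgroup property (CSP) on the other. A concrete candidate I would take is $G = \mathrm{SL}_{1,D}$, where $D$ is a central division algebra of degree $n \geq 3$ over a number field $K$, with $S$ chosen to contain at least one nonarchimedean place $v_0$ at which $D$ splits so that $G_S$ is noncompact; then $\Gamma := G(\mathcal{O}_S)$ is finitely generated, residually finite (being linear over $K$), and infinite.

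By Theorem \ref{zdlx}, this $\Gamma$ does not have (BG). For the profinite completion, the CSP for $\mathrm{SL}_{1,D}$ with $\deg D \geq 3$ has been established by Platonov--Rapinchuk and collaborators: the congruence kernel $C(G,S)$ is finite (indeed central). Consequently, via strong approximation, $\widehat{\Gamma}$ is identified up to a finite kernel with the $S$-congruence completion
\[
\overline{\Gamma}^{\mathrm{cong}} \;=\; \prod_{v \notin S} G(\mathcal{O}_v).
\]
Since $(\mathrm{BG})_{\mathrm{pr}}$ is preserved under passage between commensurable profinite groups, it suffices to exhibit a bounded factorization of $\overline{\Gamma}^{\mathrm{cong}}$ into pro-cyclic closed subgroups.

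Each local factor $G(\mathcal{O}_v)$ is a compact $v$-adic analytic group of dimension $d = \dim G$, independent of $v$. By the Lazard--Dixon--du~Sautoy--Mann--Segal structure theory, it contains an open uniformly powerful pro-$p_v$ subgroup admitting a factorization into $d$ pro-cyclic pro-$p_v$ subgroups. Grouping the places by residue characteristic, and using that a collection of pro-$p$ cyclic groups indexed by pairwise distinct primes $p$ combines in its Cartesian product into a single pro-cyclic group (via the Chinese Remainder Theorem at each finite level), one assembles a factorization of $\overline{\Gamma}^{\mathrm{cong}}$ into a uniformly bounded number of pro-cyclic factors, yielding $(\mathrm{BG})_{\mathrm{pr}}$.

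The hard part will be this last uniform assembly: one must control the finite quotients $G(\mathcal{O}_v)/U_v$ by their uniform open pro-$p_v$ subgroups $U_v$ (whose orders are bounded uniformly in $v$ but whose structures vary with $v$) and verify that the pro-cyclic factorizations of the $U_v$ patch correctly into pro-cyclic closed subgroups of the full product. Once this is in place, $\Gamma$ exhibits the desired pathology---residually finite and finitely generated, without (BG) by Theorem \ref{zdlx}, yet with $\widehat{\Gamma}$ having $(\mathrm{BG})_{\mathrm{pr}}$---which establishes the corollary.
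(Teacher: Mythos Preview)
Your overall architecture coincides with the paper's: pick an $S$-arithmetic subgroup $\Gamma$ of an anisotropic absolutely almost simple simply connected $K$-group for which the congruence kernel is known to be finite, invoke Theorem~\ref{zdlx} to rule out (BG), and use CSP to get $(\mathrm{BG})_{\mathrm{pr}}$ for $\widehat{\Gamma}$. Your choice $G=\mathrm{SL}_{1,D}$ with $\deg D\ge 3$ is exactly one of the families the paper has in mind (cf.\ \cite{Rap}). Where the paper simply cites \cite[Theorem~2]{PR-CSP} for $(\mathrm{BG})_{\mathrm{pr}}$ of $\widehat{\Gamma}$, you attempt a direct argument, and that is where a real gap appears.

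The claim that the indices $[G(\mathcal{O}_v):U_v]$ of open uniformly powerful pro-$p_v$ subgroups are ``bounded uniformly in $v$'' is false. Any such $U_v$ is contained in the first congruence subgroup $G^{(1)}(\mathcal{O}_v)$ (the maximal pro-$p_v$ normal subgroup for almost all $v$), and already $[G(\mathcal{O}_v):G^{(1)}(\mathcal{O}_v)]=|G(\kappa_v)|$ grows polynomially in $q_v$. So after your CRT assembly you have covered only an open subgroup $\prod_v U_v$ whose index in $\prod_v G(\mathcal{O}_v)$ is infinite; the residual product $\prod_v G(\mathcal{O}_v)/U_v$ is an infinite product of finite groups of unbounded order, and nothing you have said gives it a bounded pro-cyclic factorization. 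The Lazard route therefore does not close.

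Two fixes. The cheapest is to do what the paper does and quote \cite[Theorem~2]{PR-CSP}. If you want a self-contained argument in your concrete case, use that for almost all $v$ the algebra $D$ splits at $v$, so $G(\mathcal{O}_v)\cong \mathrm{SL}_n(\mathcal{O}_v)$; over the local ring $\mathcal{O}_v$ every matrix in $\mathrm{SL}_n$ is a product of a number of elementaries bounded in terms of $n$ alone, and each elementary subgroup $E_{ij}(\mathcal{O}_v)\cong(\mathcal{O}_v,+)\cong\mathbb{Z}_{p_v}^{[K_v:\mathbb{Q}_{p_v}]}$ is a product of at most $[K:\mathbb{Q}]$ pro-cyclic groups. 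Now your CRT gluing across $v$ applies legitimately to each elementary slot, the finitely many remaining (ramified) places contribute only finitely many compact $p$-adic analytic factors, and $(\mathrm{BG})_{\mathrm{pr}}$ follows.
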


(We note that our construction produces such groups $\Gamma$ that are actually $S$-arithmetic subgroups of absolutely almost simple algebraic groups defined over number fields where $S$ is a finite set of valuations of the base field containing all archimedean ones.)

Now, we would like to make a remark about the methods used to prove these results. One general technique that has been used to show that various groups, including free amalgamated products and HNN-extensions subject to certain conditions\footnote{See \S \ref{zhujie} for a precise formulation in the case of free amalgamated products.}, lattices in rank one groups etc., do not have bounded generation involves bounded cohomology, cf. \cite{F1}-\cite{F3},  \cite{Gr1}-\cite{Gr2}. More precisely, one observes that if a group is boundedly generated then its second bounded cohomology is a finite-dimensional real vector space, and then disproves bounded generation by showing that in the cases of interest this space is actually infinite dimensional. Unfortunately, this approach cannot be used to show the absence of bounded generation in the higher rank $S$-arithmetic subgroups of anisotropic absolutely almost simple algebraic groups as for these groups the second bounded cohomology vanishes \cite{BuMo}. Instead, our method hinges on the results from Diophantine geometry (more specifically, Laurent's theorem -- see Theorem \ref{dio} below) and uses the existence of generic elements in Zariski-dense subgroups \cite{PrR1}-\cite{PrR3}. The application of these techniques, particularly in such combination, in the context of group theory appears to be novel, so it would be interesting to see whether these can be employed to tackle some other group-theoretic problems.

Finally, bounded generation of $\mathrm{SL}_n({\mathcal O})$ by elementaries over the ring
${\mathcal O}$ of $S$-integers of a number field when either $n \geq 3$ or $n = 2$ and
the group of units ${\mathcal O}^{\times}$ is infinite can be used to construct a {\it polynomial
parametrization} of $\mathrm{SL}_n({\mathcal O})$ in those cases. This means that there exists
a polynomial matrix $A(x_1, \ldots , x_r) \in \mathrm{SL}_n(\mathbb{Z}[x_1, \ldots , x_r])$ such that
the values $A(a_1, \ldots , a_r)$ with $a_i \in {\mathcal O}$ fill up all of $\mathrm{SL}_n({\mathcal O})$.
It is rather remarkable, however, that a polynomial parametrization of $\mathrm{SL}_2({\mathcal O})$
exists even when the group does not have bounded generation, i.e. when ${\mathcal O}$ is either $\mathbb{Z}$
or the ring of integers of an imaginary quadratic field - see \cite{Larsen}, \cite{Vasser}. Likewise, bounded generation of $\mathrm{SL}_n({\mathcal O})$ by semi-simple elements would give its parametrization by purely exponential polynomials. Since now we know that bounded generation by of this group by semi-simples is impossible, it would be interesting to determine if $\mathrm{SL}_n({\mathcal O})$ in all or at least some cases can still be parametrized by purely exponential polynomials.

The structure of the paper is the following. In \S \ref{special}, we show that it is enough to prove Theorem \ref{mainthm} for a subgroup $\Gamma \subset \mathrm{GL}_n(K)$ where $K$ is a number field. In \S \ref{gpreduction} we recall a result on the existence of generic elements in Zariski-dense subgroups and  derive consequences needed for our argument. In \S4, we use Laurent's theorem to establish a key statement about finite product of cyclic subgroups in $\mathrm{GL}_n(K)$ (where $K$ is a number field) generated by elements which are semi-simple with one possible exception -- see Theorem \ref{T:notsubset}.
We then prove Theorems \ref{mainthm}, \ref{zdlx} and Corollaries \ref{C:1} and \ref{negative} in \S \ref{S:main}. Finally, in \S \ref{zhujie} we give an example of a finitely generated linear solvable group without (BG), make concluding remarks and formulate some open problems.

\section{A reduction to linear groups over number fields}\label{special}

The goal of this section is to reduce the proof of Theorem \ref{mainthm} to the case of linear groups over number fields. The argument is based on the following proposition that enables us to construct a suitable specialization.

\begin{prop}\label{specialization}
Let $R$ be a finitely generated $\mathbb{Q}$-algebra without zero divisors. Given a non-virtually solvable subgroup
$\Gamma \subset \mathrm{GL}_n(R)$ and semi-simple elements $\gamma_1, \ldots , \gamma_r \in \Gamma$, there exists a $\mathbb{Q}$-algebra homomorphism $\theta \colon R \to F$ to a number field $F$ such that for the corresponding group homomorphism $\Theta \colon \mathrm{GL}_n(R) \to \mathrm{GL}_n(F)$, the image
$\Theta(\Gamma)$ is not a virtually solvable group and each of the elements $\Theta(\gamma_1), \ldots , \Theta(\gamma_r)$ is semi-simple.
\end{prop}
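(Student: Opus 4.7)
The plan is to fix $L := \mathrm{Frac}(R)$, view $\Gamma \subset \mathrm{GL}_n(L)$, and produce the specialization by carving out a non-empty Zariski-open subset $U \subset \mathrm{Spec}(R)$ on which the two desired properties hold. Since $R$ is Jacobson as a finitely generated $\mathbb{Q}$-algebra, the closed points of $\mathrm{Spec}(R)$ form a Zariski-dense set and their residue fields are number fields, so it will suffice to pick any closed point $\mathfrak{m} \in U$ and set $\theta \colon R \to F := k(\mathfrak{m})$.

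For the semi-simplicity half, let $\mu_i(t) \in L[t]$ be the minimal polynomial of $\gamma_i$. Semi-simplicity means $\mu_i$ is separable, so its discriminant $d_i \in L$ is nonzero. After localizing $R$ we may assume $\mu_i(t) \in R[t]$ and $d_i \in R \setminus \{0\}$; the condition $\theta(d_1 \cdots d_r) \neq 0$ then defines a non-empty Zariski-open subset $U_1 \subset \mathrm{Spec}(R)$. For any such $\theta$, the polynomial $\theta(\mu_i) \in F[t]$ is separable and still annihilates $\Theta(\gamma_i)$, so the minimal polynomial of $\Theta(\gamma_i)$ is separable, forcing $\Theta(\gamma_i)$ to be semi-simple.

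For the non-virtual-solvability half, let $G \subset \mathrm{GL}_{n,L}$ be the Zariski closure of $\Gamma$; the hypothesis on $\Gamma$ is equivalent to the identity component $G^\circ$ being non-solvable, so the semi-simple quotient $S := G^\circ / \mathrm{rad}(G^\circ)$ is a connected semi-simple group of positive dimension. By Noetherianity of the lattice of closed subgroups of $\mathrm{GL}_n$, I select finitely many $g_1, \ldots, g_s \in \Gamma \cap G^\circ$ whose generated subgroup is Zariski-dense in $G^\circ$ (and hence projects Zariski-densely onto $S$). After a further localization, all of these objects spread out: there are smooth affine group schemes $\mathcal{G}^\circ \supset \mathcal{R}$ over $\mathrm{Spec}(R)$ with quotient $\mathcal{S} := \mathcal{G}^\circ / \mathcal{R}$ a semi-simple group scheme of positive relative dimension, and the $g_i$'s give sections of $\mathcal{S}$. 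Every fiber $\mathcal{S}_\mathfrak{m}$ is then a non-trivial connected semi-simple group, so if one can guarantee that $\Theta(g_1), \ldots, \Theta(g_s)$ Zariski-generate $\mathcal{S}_\mathfrak{m}$, then $\Theta(\Gamma)$ will surject onto a Zariski-dense subgroup of a non-trivial semi-simple group and thus fail to be virtually solvable.

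The main obstacle is to arrange that Zariski-density of $(g_1, \ldots, g_s)$ in $\mathcal{S}$ is preserved on fibers over a Zariski-open subset $U_2 \subset \mathrm{Spec}(R)$. Here I would use the classical structure theory of semi-simple algebraic groups: over an algebraically closed field of characteristic zero a connected semi-simple group has only finitely many conjugacy classes of maximal proper closed subgroups (by the work of Dynkin and Borel--de Siebenthal). For each class with representative $H_j \subset \mathcal{S}$, the locus $Z_j \subset \mathcal{S}^s$ of $s$-tuples contained in some common conjugate $\sigma H_j \sigma^{-1}$ is the Zariski closure of the image of the conjugation morphism $\mathcal{S} \times H_j^s \to \mathcal{S}^s$, which by a direct dimension count is a constructible subset of dimension strictly smaller than $\dim \mathcal{S}^s$, hence a proper closed subscheme. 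The complement $V$ of $\bigcup_j Z_j$ is therefore a non-empty Zariski-open subscheme of $\mathcal{S}^s$ whose generic fiber contains the tuple $(g_1, \ldots, g_s)$ by construction; its preimage $U_2$ under the section $(g_1, \ldots, g_s) \colon \mathrm{Spec}(R) \to \mathcal{S}^s$ is therefore non-empty and Zariski-open in $\mathrm{Spec}(R)$. Any closed point $\mathfrak{m} \in U_1 \cap U_2$ --- guaranteed to exist by density of closed points in the non-empty open $U_1 \cap U_2$ --- then yields the desired specialization.
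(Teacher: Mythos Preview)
Your semi-simplicity half (the construction of $U_1$ via discriminants of minimal polynomials) is correct and is exactly what the paper does. The non-virtual-solvability half, however, has a genuine gap, and in any case takes a far more elaborate route than the paper.

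The gap is in the sentence ``The complement $V$ of $\bigcup_j Z_j$ is therefore a non-empty Zariski-open subscheme of $\mathcal{S}^s$ whose generic fiber contains the tuple $(g_1, \ldots, g_s)$ by construction.'' You know that $(g_1,\ldots,g_s)$, being a generating tuple over $L$, avoids the \emph{image} of each conjugation map $\mathcal{S}\times H_j^s\to\mathcal{S}^s$; but you defined $Z_j$ as the \emph{Zariski closure} of that image, and nothing you have written excludes the tuple from $Z_j\setminus(\text{image})$. Your dimension count only shows each $Z_j$ is a proper subvariety, so that $V\neq\varnothing$; it does not place this particular section inside $V$, and hence does not show $U_2\neq\varnothing$. (A side remark: the dimension count as literally stated, from the source $\mathcal{S}\times H_j^s$, can already fail --- for $S=\mathrm{SL}_2$, $H_j=B$ a Borel, $s=2$, the source has dimension $7$ and the target $6$; one must first factor through $(\mathcal{S}/N_{\mathcal{S}}(H_j))\times H_j^s$.) Filling the main gap amounts to proving that the non-generating locus in $S^s$ is Zariski-closed. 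This is true in characteristic zero, but it is itself a nontrivial theorem: for non-parabolic maximal $H_j$ the quotient $S/H_j$ is affine rather than projective, so closedness does not drop out of your incidence picture. You also pass rather quickly over why the finitely many conjugacy classes of maximal subgroups, classified over $\overline{L}$, spread out to subgroup schemes $H_j\subset\mathcal{S}$ whose fibres at each closed point $\mathfrak{m}$ again exhaust the maximal subgroups of $\mathcal{S}_{\mathfrak{m}}$.

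The paper sidesteps all of this with an elementary device. It proves a uniform bound (Lemma~\ref{L:1}): there exists $\ell=\ell(n)$ such that every virtually solvable $\Delta\subset\mathrm{GL}_n(E)$ over a field of characteristic zero satisfies $\mathscr{D}^{n+1}(\Delta^{(\ell)})=\{I_n\}$. Consequently the failure of virtual solvability for $\Gamma$ is witnessed by a \emph{single} element $g\in\mathscr{D}^{n+1}(\Gamma^{(\ell)})\setminus\{I_n\}$. One then simply adjoins to $R$ the inverse of one nonzero entry of $g-I_n$ (along with the inverses of the discriminants $d_i$) and picks any maximal ideal of the resulting ring. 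No spreading-out of semi-simple group schemes, no classification of maximal subgroups, and no openness-of-generation statement is required.
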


We note that  similar statements but without the assertion of the semi-simplicity of the images of the given semi-simple elements can be found in
\cite[\S 3]{Ohinvent} and \cite[Proposition 16.4.13]{gpgrowth}. For the proof we need the following lemma. Given a group $\Delta$, we let $\mathscr{D}^m(\Delta)$ denote the $m$-th term of the derived series of $\Delta$, and for $\ell \in \mathbb{N}$ let $\Delta^{(\ell)}$ denote the (normal) subgroup of $\Delta$ generated by the $\ell$-th powers of its elements.
\begin{lemma}\label{L:1}
There exists $\ell = \ell(n)\in \mathbb{N}$ such that for \underline{any} virtually solvable subgroup $\Delta \subset \mathrm{GL}_n(E)$, where $E$ is a field of characteristic zero, we have
$$
\mathscr{D}^{n+1}(\Delta^{(\ell)}) = \{ I_n \}.
$$
\end{lemma}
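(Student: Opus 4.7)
The plan is to choose $\ell=\ell(n)$ large enough that $\Delta^{(\ell)}$ is forced inside a simultaneously triangularizable subgroup of $\mathrm{GL}_n(\bar E)$; one then finishes with the well known fact that the upper triangular group $B_n$ has uniformly bounded derived length. The key external input is the Jordan--Mal'cev--Platonov theorem: there exists a constant $N=N(n)$, depending only on $n$, such that every virtually solvable subgroup $\Delta\subset\mathrm{GL}_n(F)$ over a characteristic zero field $F$ contains a subgroup $\Delta_0\subseteq\Delta$ with $[\Delta:\Delta_0]\leq N$ which is triangularizable, i.e.\ simultaneously conjugate in $\mathrm{GL}_n(\bar F)$ into the upper triangular subgroup $B_n(\bar F)$. (See Platonov's 1966 Izvestiya paper, or Wehrfritz, \emph{Infinite Linear Groups}, Chap.~10.) I would simply cite this as a black box.

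Given $N=N(n)$, set $\ell:=N!$. Applying the theorem above to $\Delta$ produces a triangularizable $\Delta_0\subseteq\Delta$ of index at most $N$. For any $\gamma\in\Delta$, left multiplication by $\gamma$ permutes the at most $N$ left cosets of $\Delta_0$ in $\Delta$, so the permutation induced by $\gamma$ has order dividing $N!=\ell$; in particular $\gamma^{\ell}$ fixes the coset $\Delta_0$ and so $\gamma^{\ell}\in\Delta_0$. Since $\Delta^{(\ell)}$ is by definition generated by the set $\{\gamma^{\ell}:\gamma\in\Delta\}$, we conclude that
$$
\Delta^{(\ell)} \;\subseteq\; \Delta_0.
$$

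It then remains to verify that any triangularizable subgroup of $\mathrm{GL}_n(\bar E)$ has derived length at most $n$. This is standard and elementary: $[B_n,B_n]\subseteq U_n$, the strictly upper triangular unipotent part, and a direct induction on the commutator identity $[I+A,I+B]\equiv I+AB-BA\pmod{\text{higher superdiagonals}}$ shows that the $k$-th derived subgroup of $U_n$ consists of matrices whose nonzero off-diagonal entries lie at positions $(i,j)$ with $j-i\geq 2^k$. Hence $U_n$ has derived length $\lceil\log_2 n\rceil$ and $B_n$ has derived length $1+\lceil\log_2 n\rceil\leq n$ for all $n\geq 1$. Consequently $\mathscr{D}^n(\Delta_0)=\{I_n\}$, so $\mathscr{D}^{n+1}(\Delta^{(\ell)})\subseteq\mathscr{D}^n(\Delta_0)=\{I_n\}$, as required.

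The principal obstacle is the Jordan--Mal'cev--Platonov bound itself: the subtlety is that a naive approach (first take a solvable subgroup $\Delta_1\subseteq\Delta$ of finite index, apply Mal'cev's theorem to get a triangularizable subgroup of index $\leq m(n)$ inside $\Delta_1$, then pass to the core in $\Delta$ to enforce normality) would blow the total index up by the \emph{uncontrolled} factor $m(n)^{[\Delta:\Delta_1]}$, destroying the uniformity in $n$ alone. Platonov's refinement, which handles soluble-by-finite groups in one stroke via an analysis at the level of the Zariski closure combined with Jordan's theorem on finite linear groups, is exactly what rescues the uniform bound and so makes the above choice of $\ell=\ell(n)$ possible.
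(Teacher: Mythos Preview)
Your argument is correct and follows essentially the same strategy as the paper's proof: obtain a triangularizable subgroup of index bounded in terms of $n$ alone, use the factorial trick to push $\Delta^{(\ell)}$ inside it, and then invoke the uniform bound on the derived length of triangular groups. The only difference is in packaging: you cite the Jordan--Mal'cev--Platonov theorem as a single black box, whereas the paper unpacks this by passing to the Zariski closure $G$, applying Lie--Kolchin to $G^{\circ}$, and using Bass's generalization of Jordan's theorem to bound the index of an abelian normal subgroup in $G/G^{\circ}$ (which is precisely the mechanism behind Platonov's bound that you describe in your final paragraph). A minor structural consequence of this difference is that the paper spends one commutator killing the abelian quotient $H$ before landing in $G^{\circ}$, which is why $n+1$ rather than $n$ appears; your version in fact gives $\mathscr{D}^{n}(\Delta^{(\ell)})=\{I_n\}$ directly, and you only weaken to $n+1$ to match the stated lemma.
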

\begin{proof}
We can assume without loss of generality that $E$ is algebraically closed.
Let $G$ be the Zariski-closure of $\Delta$. Since $\Delta$ is virtually solvable, the connected component $G^{\circ}$ is solvable, hence triangularizable by the  Lie-Kolchin theorem (cf. \cite[Corollary 10.5]{borel}). It follows that
\begin{equation}\label{E:Z1}
\mathscr{D}^n(G^{\circ}) = \{ I_n \}.
\end{equation}
On the other hand, according to Bass's generalization of Jordan's theorem \cite[Theorem 1]{Bass}, there exists $j \in \mathbb{N}$ depending only on $n$ such that
$G/G^{\circ}$ has an abelian normal subgroup $H$ of index at most $j$. Set $\ell = j!$. Then for the canonical morphism $f \colon G \to G/G^{\circ}$ we  have
$$
f(\Delta^{(\ell)}) = f(\Delta)^{(\ell)} \subset H,
$$
and therefore $f(\mathscr{D}^1(\Delta^{(\ell)})) = \{ e \}$, i.e. $\mathscr{D}^1(\Delta^{(\ell)}) \subset G^{\circ}$. Combining this with (\ref{E:Z1}), we obtain our claim.
\end{proof}

Beginning the proof of Proposition \ref{specialization}, let us first show that
\begin{equation}\label{E:1}
\mathscr{D}^{n+1}(\Gamma^{(\ell)}) \neq \{ I_n \},
\end{equation}
where $\ell = \ell(n)$ is the constant from Lemma \ref{L:1}.
Assume the contrary, and let $G$ and $H$ denote the Zariski-closures of $\Gamma$ and $\Gamma^{(\ell)}$, respectively; clearly, $H$ is a normal subgroup of $G$. For the power map $$\mu \colon G \to G, \ \  g \mapsto g^{\ell},$$ we have $\mu(\Gamma) \subset H$, and therefore $\mu(G) \subset H$. This means that the quotient $G/H$ is an algebraic group of the finite exponent $\ell$, hence finite since $\mathrm{char}\: E = 0$. On the other hand, since $D^{n+1}(\Gamma^{(\ell)}) = \{ I_n \}$, the group $H$ is solvable. This means that the finite index subgroup $\Gamma \cap H \subset \Gamma$ is solvable, making $\Gamma$ virtually solvable. This contradicts our assumption proving (\ref{E:1}).

 According to (\ref{E:1}), we can pick an element $g \in \mathscr{D}^{n+1}(\Gamma^{(\ell)}) \setminus \{ I_n \}$, and let $a$ be any nonzero entry of the matrix $g - I_n$. Next, let $L$ be the field of fractions of $R$, and let $f_i(t) \in L[t]$ will be the minimal polynomial of the matrix $\gamma_i \in \mathrm{GL}_n(R)$. Replacing $R$ with a larger finitely generated $\mathbb{Q}$-subalgebra $R' \subset L$, we can assume that $f_i \in R[t]$. Since $\gamma_i$ is semi-simple, the polynomial $f_i$ does not have multiple roots, hence its discriminant $d_i$ is $\neq 0$. Set $d = d_1 \cdots d_r$. Replacing $R$ by the localization $R\left[ \frac{1}{ad} \right]$, we can assume that both $a$ and $d$ are invertible in $R$. Let $\mathfrak{m}$ be a maximal ideal of $R$. Since $R$ is a finitely generated $\mathbb{Q}$-algebra, it follows from a version of Nullstellensatz (cf. \cite[Ch. IX, Corollary 1.2]{Langalgebra}) that $F := R/\mathfrak{m}$ is a finite extension of $\mathbb{Q}$. We will now show that the canonical homomorphism $\theta \colon R \to F$ is as required.

First, since $a \in R^{\times}$, we have $\theta(a) \neq 0$, and therefore $\Theta(g) \neq I_n$. Thus, for $\Delta = \Theta(\Gamma)$ we have
$$
\mathscr{D}^{n+1}(\Delta^{(\ell)}) = \Theta(\mathscr{D}^{n+1}(\Gamma^{(\ell)})) \neq \{ I_n \},
$$
which by Lemma \ref{L:1} implies that $\Delta$ is not virtually solvable. Second, since $d \in R^{\times}$, for each $i = 1, \ldots , r$ we have $\theta(d_i) \neq 0$, which means that the (monic) polynomial $\bar{f}_i(t) \in F[t]$, obtained by applying $\theta$ to the coefficients of $f_i$, does not have multiple roots. Since $\bar{f}_i(\Theta(\gamma_i)) = 0$, the minimal polynomial of the matrix $\Theta(\gamma_i)$ does not have multiple roots either, implying that this matrix is semi-simple, as required.

\vskip2mm

\noindent {\bf Reduction 2.3.} {\it If the first assertion of Theorem \ref{mainthm} is valid for all subgroups $\Delta \subset \mathrm{GL}_n(F)$ where $F$ is a number field (i.e., every such subgroup that has a presentation (\ref{E:BG}) in which all elements $\gamma_i$, with one possible exception, are semi-simple is necessarily virtually solvable) then it is also valid for all subgroups $\Gamma \subset \mathrm{GL}_n(K)$ where $K$ is any field of characteristic zero.}

\vskip2mm

Indeed, assume that there is a non-virtually solvable subgroup $\Gamma \subset \mathrm{GL}_n(K)$ that admits a presentation (BG) in which all matrices $\gamma_i$, with one possible exception, are semi-simple. Let $R$ be the $\mathbb{Q}$-subalgebra generated by the entries of all the $\gamma_i$'s and their inverses; clearly $\Gamma \subset \mathrm{GL}_n(R)$. Then Proposition \ref{specialization} yields a homomorphism $\theta \colon R \to F$ to a number field $F$ such that for the corresponding homomorphism $\Theta \colon \mathrm{GL}_n(R) \to \mathrm{GL}_n(F)$, the group $\Delta = \Theta(\Gamma)$ is not virtually solvable and all matrices $\Theta(\gamma_1), \ldots , \Theta(\gamma_r)$, with one possible exception, are semi-simple. Applying $\Theta$ to (BG), we obtain
$$
\Delta = \langle \Theta(\gamma_1) \rangle \cdots \langle \Theta(\gamma_r) \rangle.
$$
Then by our assumption $\Delta$ must be virtually solvable, which is not the case by our construction. A contradiction, justifying the reduction.

\section{On generic elements and their eigenvalues}\label{gpreduction}

The proof of Theorem \ref{mainthm} requires a result (see Proposition \ref{gp} below) stating that given a finite set of elements in a linear group over a field of characteristic zero whose Zariski-closure has semi-simple connected component, the group always contains a semi-simple element with the eigenvalues multiplicatively independent from those of the given elements. This fact is interesting in its own right, and its proof relies on the existence and properties of \textbf{generic elements}, which we will now recall. (We refer the reader to \cite{borel} for the notions related to the theory of algebraic groups.)

Let $G$ be a semi-simple algebraic group over a field $K$, and let $T$ be a maximal $K$-torus of $G$. The absolute Galois group $\mathcal{G} = \Gal(K^{\mathrm{sep}}/K)$ naturally acts on the character group $X(T)$, and this action leaves the corresponding root system $\Phi := \Phi(G , T)$ invariant,
yielding a (continuous) group homomorphism
$$\rho_T\colon\mathcal{G}\longrightarrow \Aut(\Phi)$$
to the automorphism group of $\Phi$.

\addtocounter{thm}{1}

\vskip2mm

\noindent {\bf Definition 3.1.} {\it A maximal $K$-torus $T$ is said to be \textbf{generic} over $K$ (or $K$-generic) if the image $\im \rho_T$ contains the Weyl group $W(\Phi) \subset \Aut(\Phi)$. Furthermore, a regular semi-simple element $\gamma\in G(K)$ is  $K$-\textbf{generic} if the $K$-torus $T :=Z_G(\gamma)^{\circ}$ (``connected centralizer'') is generic over $K$.}

\vskip2mm

\noindent We refer the reader to \cite[\S 9]{PrR2} for a discussion of these notions.  The following existence theorem is proved in \cite{PrR1} (see also \cite[Theorem 9.6]{PrR2}); its extension to Zariski-dense subgroups of absolutely almost simple algebraic groups over fields of positive characteristic is given in \cite{PrR3}.

\begin{thm}\label{generic}
Let $G$ be a semi-simple algebraic group over a finitely generated field $K$ of characteristic zero\footnotemark, and let $\Gamma\subset G(K)$ be a finitely generated Zariski-dense subgroup. Then $\Gamma$ contains a $K$-generic semi-simple element without components of finite order.
\end{thm}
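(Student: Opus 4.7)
The plan is to prove Theorem~\ref{generic} by reducing it to a strong approximation statement for the Zariski-dense subgroup $\Gamma$ together with a counting argument in finite groups of Lie type. First I would spread out: since $K$ is finitely generated over $\mathbb{Q}$ and $\Gamma$ is finitely generated, pick a finitely generated integral subring $R\subset K$ such that $\Gamma\subset G(R)$ and $G$ extends to a reductive group scheme over $R$. For a maximal ideal $\mathfrak{m}\subset R$, the residue field $k_\mathfrak{m}$ is finite by the Nullstellensatz, and the key input from strong approximation theorems for Zariski-dense subgroups (in the spirit of Weisfeiler, Matthews--Vaserstein--Weisfeiler, Nori, and Pink) is that, after replacing $\Gamma$ by a Zariski-dense finite-index subgroup if necessary, the reduction map $\Gamma\to G(k_\mathfrak{m})$ is surjective for a cofinite family of $\mathfrak{m}$, and $\Gamma$ surjects onto any finite product $G(k_{\mathfrak{m}_1})\times\cdots\times G(k_{\mathfrak{m}_s})$ of such residues.

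Next I would carry out a local construction in each $G(\mathbb{F}_q)$. Maximal $\mathbb{F}_q$-tori of $G$ are parametrised by $F$-conjugacy classes in the Weyl group $W=W(\Phi)$, where $F$ is the geometric Frobenius, and by the Steinberg / Deligne--Lusztig formulas the number of regular semi-simple $\mathbb{F}_q$-points whose connected centralizer lies in a prescribed class $[w]$ is $(1+o(1))|G(\mathbb{F}_q)|/|W|$ as $q\to\infty$. Consequently, for any prescribed $w\in W$ and for $q$ sufficiently large, $G(\mathbb{F}_q)$ contains a regular semi-simple element whose local centralizer torus has Galois image (on the character lattice) containing $w$.

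I would then combine finitely many primes in order to fill up the Weyl group. Choose primes $\mathfrak{m}_1,\ldots,\mathfrak{m}_s$ with large residue cardinality, together with Weyl group elements $w_1,\ldots,w_s$ that generate $W$ --- for instance, a Coxeter element together with enough simple reflections --- and by strong approximation pick $\gamma\in\Gamma$ whose reduction modulo $\mathfrak{m}_i$ is a regular semi-simple element of prescribed $W$-class $[w_i]$. A standard specialization argument, using the smoothness of the scheme of maximal tori and the compatibility of centralizers with reduction, identifies the Galois image on the character lattice of $T=Z_G(\gamma)^\circ$ at each $\mathfrak{m}_i$ with a subgroup of $W$ containing $w_i$; therefore $\mathrm{im}(\rho_T)\supset\langle w_1,\ldots,w_s\rangle=W$, so $T$ is $K$-generic.

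The remaining difficulty, and the main obstacle, is to ensure that $\gamma$ has no components of finite order in the almost direct product decomposition of $G$. I would handle this by adding one more open local condition at an auxiliary prime $\mathfrak{m}_0$: require that for each simple factor $G_j$ of $G$, the projection of $\gamma\bmod\mathfrak{m}_0$ into $G_j(k_{\mathfrak{m}_0})$ have multiplicative order exceeding a bound $N$ chosen larger than any torsion order that could occur in an algebraic torus of rank $\leq\mathrm{rk}(G)$. A component of finite order at the level of $G$ would force the relevant projections to have bounded order in every reduction, contradicting this choice. Since each requirement is a nonempty open condition in the corresponding finite group, strong approximation produces a single $\gamma\in\Gamma$ satisfying all the conditions simultaneously, yielding the desired $K$-generic semi-simple element without components of finite order.
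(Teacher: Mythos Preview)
The paper does not give its own proof of this theorem: it is quoted from Prasad--Rapinchuk \cite{PrR1} (see also \cite[Theorem~9.6]{PrR2}) and used as a black box. Your sketch is in fact a fair outline of the strategy employed in those references --- strong approximation for $\Gamma$, the parametrisation of maximal $\mathbb{F}_q$-tori by $F$-conjugacy classes in $W$, and the assembly of Frobenius elements at several primes to force $\mathrm{im}(\rho_T)\supset W$ --- so on the global architecture you are on the right track, and there is nothing in the paper itself to compare against beyond the citation.

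There is, however, a genuine gap in your handling of the ``no components of finite order'' condition. You propose a bound $N$ ``larger than any torsion order that could occur in an algebraic torus of rank $\leq\mathrm{rk}(G)$,'' but no such bound exists: already a rank-one torus over $\overline{K}$ contains elements of every finite order. What actually furnishes a bound is the arithmetic of $K$, not the rank of the torus. Since $K$ is finitely generated over $\mathbb{Q}$, the intersection $\overline{\mathbb{Q}}\cap K$ is a number field, and a Minkowski-type argument bounds the order of any root of unity that can appear as an eigenvalue of an element of $\mathrm{GL}_n$ over $K$ (or over the fixed finite extension of $K$ splitting the almost-simple factors, which is where the components $\gamma_j$ live). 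With this correction your local-condition argument at $\mathfrak{m}_0$ can be salvaged. A cleaner route, and the one implicitly underlying the paper's later use of the theorem (cf.\ the proof of Proposition~\ref{gp}), is to first replace $\Gamma$ by a neat finite-index subgroup via Selberg's lemma: then $\Lambda(\gamma)$ is torsion-free for every $\gamma$, which for a \emph{regular} semi-simple $\gamma$ forces each component to be of infinite order, and the auxiliary prime $\mathfrak{m}_0$ becomes unnecessary. A smaller imprecision worth noting: the Frobenius at $\mathfrak{m}_i$ pins down $w_i$ only up to $W$-conjugacy, so ``choose $w_1,\ldots,w_s$ generating $W$'' has to be phrased in terms of conjugacy classes; the standard remedy is to use one prime per conjugacy class of $W$ and invoke the fact that a proper subgroup of a finite group misses some conjugacy class.
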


\footnotetext{I.e., a finitely generated extension of $\Q$.}

Here the \textbf{components} of an element are understood in terms of the decomposition $G=G_1 \cdots G_r$ as an almost direct product of absolutely almost simple groups. We note that a $K$-generic regular semi-simple element without components of finite order generates a Zariski-dense subgroup of a maximal torus that contains it, making this torus unique, cf. \cite[p. 22]{PrR3}. We also note that

\vskip1mm

\begin{lemma}\label{tezheng}
Let $G$ be a semi-simple algebraic group over a finitely generated field $K$ of characteristic zero, and let $\Gamma\subset G(K)$ be a finitely generated Zariski dense subgroup.
Then for any finitely generated extension $L$ of $K$, there exists a regular semi-simple element $\gamma \in \Gamma$ without components of finite order that satisfies the following condition: for the torus $T :=Z_G(\gamma)^{\circ}$ and any character $\chi \in X(T)$, the fact that $\chi(\gamma) \in L^{\times}$ implies $\chi(\gamma)=1$.
\end{lemma}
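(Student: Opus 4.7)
The plan is to invoke Theorem \ref{generic} not over $K$ but over the larger field $L$. Because $L$ is a finitely generated extension of the finitely generated field $K$, it is itself finitely generated over $\mathbb{Q}$; and $\Gamma \subset G(K) \subset G(L)$ remains finitely generated and Zariski-dense when $G$ is regarded as an algebraic group over $L$. Hence Theorem \ref{generic} furnishes an element $\gamma \in \Gamma$ that is regular semi-simple, has no components of finite order, and is $L$-generic in the sense that, writing $T := Z_G(\gamma)^{\circ}$ and $\Phi := \Phi(G,T)$, the image of the Galois representation $\rho_T\colon \Gal(L^{\mathrm{sep}}/L) \to \Aut(\Phi)$ contains the Weyl group $W := W(\Phi)$. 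By the remark recorded just after Theorem \ref{generic}, such a $\gamma$ generates a Zariski-dense subgroup of $T$; in particular the evaluation homomorphism $\varphi \colon X(T) \to \overline{K}^{\times}$, $\chi \mapsto \chi(\gamma)$, is injective, since any nontrivial $\chi$ in its kernel would confine $\gamma$ to the proper closed subgroup $\ker\chi \subsetneq T$.

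Given this, suppose $\chi \in X(T)$ satisfies $\chi(\gamma) \in L^{\times}$. For every $\sigma \in \Gal(L^{\mathrm{sep}}/L)$ one has
\[
(\sigma\chi)(\gamma) \;=\; \sigma\bigl(\chi(\gamma)\bigr) \;=\; \chi(\gamma),
\]
because $\chi(\gamma) \in L^{\times}$ is fixed by $\sigma$. Consequently $(\sigma\chi - \chi)(\gamma) = 1$, and the injectivity of $\varphi$ forces $\sigma\chi = \chi$. Thus every such $\chi$ is $\Gal(L^{\mathrm{sep}}/L)$-invariant.

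To finish, I pass to $\mathbb{Q}$-coefficients: since $G$ is semi-simple, $X(T) \otimes_{\mathbb{Z}} \mathbb{Q}$ coincides with the $\mathbb{Q}$-span $\mathbb{Q}\Phi$ of the roots, and the Galois action on this vector space is controlled by its image in $\Aut(\Phi)$, which by $L$-genericity contains $W$. A standard fact gives $(\mathbb{Q}\Phi)^{W} = 0$ for the Weyl group of any (nonzero) root system arising from a semi-simple group, so
\[
\bigl(X(T) \otimes \mathbb{Q}\bigr)^{\Gal(L^{\mathrm{sep}}/L)} \;\subset\; (\mathbb{Q}\Phi)^{W} \;=\; 0.
\]
Since $X(T)$ is torsion-free, this forces $\chi = 0$, and therefore $\chi(\gamma) = 1$, as required.

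The main delicate point is the bridge between Galois-invariance on the root system (which is what $L$-genericity delivers) and the vanishing of the integral character $\chi$; this is handled precisely by the identification $X(T) \otimes \mathbb{Q} = \mathbb{Q}\Phi$ valid in the semi-simple setting, combined with the crucial choice to apply Theorem \ref{generic} over $L$ rather than $K$, which is what allows us to treat eigenvalues lying in the potentially much larger field $L$ in the hypothesis $\chi(\gamma) \in L^{\times}$.
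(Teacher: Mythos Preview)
Your proof is correct and follows essentially the same approach as the paper: apply Theorem \ref{generic} over the larger field $L$ to obtain an $L$-generic $\gamma$, then use Zariski-density of $\langle \gamma \rangle$ in $T$ together with the Galois computation $(\sigma\chi)(\gamma)=\chi(\gamma)$ to force $\chi$ to be Galois-invariant, and finally conclude $\chi=0$. The only difference is cosmetic: the paper simply asserts that $L$-genericity of $T$ implies $X(T)^{\Gal(\overline{L}/L)}=0$, whereas you spell this out via $X(T)\otimes\mathbb{Q}=\mathbb{Q}\Phi$ and $(\mathbb{Q}\Phi)^{W}=0$.
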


\begin{proof}
Since $L$ is itself a finitely generated field of characteristic zero, we can use Theorem \ref{generic} to find a regular semi-simple element $\gamma \in \Gamma$ without components of finite order that is $L$-generic. Let $T = Z_G(\gamma)^{\circ}$, and let $\chi \in X(T)$ be a character such that $\chi(\gamma) \in L^{\times}$. Then for any $\sigma \in \Gal(\overline{L}/L)$ we have
$$
(\sigma(\chi))(\gamma)=\sigma(\chi(\sigma^{-1}(\gamma)))=\chi(\gamma).
$$
As we observed earlier, the cyclic group $\langle \gamma\rangle$ is Zariski-dense in $T$, so the above equation yields $\sigma(\chi) = \chi$. On the other hand, the fact that $T$ is $L$-generic implies that $X(T)$ does not contain any nontrivial $\Gal(\overline{L}/L)$-fixed elements. Thus, $\chi = 0$  and $\chi(\gamma)=1$.
\end{proof}

For $\gamma \in \mathrm{GL}_n(K)$, we let $\Lambda(\gamma)$ denote the subgroup of $\overline{K}^{\times}$ generated by all eigenvalues of $\gamma$ in $\overline{K}^{\times}$. We observe that if $\gamma$ lies in $T(K)$ for some $K$-torus $T \subset \mathrm{GL}_n$, then $\Lambda(\gamma)$ coincides with the set of all values $\chi(\gamma)$ of the characters $\chi \in X(T)$.

\vskip1.5mm

\addtocounter{thm}{1}

\noindent {\bf Definition 3.4.} {\it Let $\gamma, \gamma_1, \ldots , \gamma_r \in \mathrm{GL}_n(K)$. We say that the eigenvalues of $\gamma$ are {\bf multiplicatively independent} from those of $\gamma_1, \ldots , \gamma_r$ if
\begin{equation}\label{E:Y1}
\Lambda(\gamma) \: \cap \: \left[\Lambda(\gamma_1) \cdots \Lambda(\gamma_r)\right] = \{ 1 \}.
\end{equation}}

%\vskip.5mm

\begin{prop}\label{gp}
Let $\Gamma \subset \mathrm{GL}_n(K)$ be a finitely generated linear group over a field $K$ of characteristic zero, and let $G$ be its Zariski-closure. Assume that the connected component $G^{\circ}$ is a nontrivial semi-simple group. Then for any $\gamma_1, \ldots , \gamma_r \in \Gamma$ there exists a semi-simple element $\gamma \in \Gamma \cap G^{\circ}$ whose eigenvalues are multiplicatively independent from those of $\gamma_1, \ldots , \gamma_r$ and for which the subgroup $\Lambda(\gamma)$ is nontrivial and torsion-free.
\end{prop}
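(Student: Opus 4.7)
The plan is to apply Lemma \ref{tezheng} to the semi-simple group $G^{\circ}$ with an auxiliary extension $L/K$ chosen to absorb all eigenvalues of the given elements $\gamma_1,\ldots,\gamma_r$. As preparation, I would first reduce to the case of a finitely generated base field: since $\Gamma$ is finitely generated and $\gamma_1,\ldots,\gamma_r$ are finitely many matrices, the entries of a finite generating set of $\Gamma$ together with the entries of $\gamma_i^{\pm 1}$ lie in some finitely generated subfield $K_0 \subset K$, and I may replace $K$ by $K_0$. Since $\Gamma$ is Zariski-dense in $G$ and $G/G^{\circ}$ is finite, the subgroup $\Gamma' := \Gamma \cap G^{\circ}$ remains finitely generated and is Zariski-dense in the semi-simple $K$-group $G^{\circ}$.

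Next, let $L \subset \overline{K}$ be the field obtained from $K$ by adjoining all eigenvalues of $\gamma_1,\ldots,\gamma_r$; this is a finite extension of $K$, hence finitely generated over $\mathbb{Q}$, and by construction $\Lambda(\gamma_1)\cdots\Lambda(\gamma_r) \subset L^{\times}$. Applying Lemma \ref{tezheng} to the Zariski-dense subgroup $\Gamma' \subset G^{\circ}(K)$ with this $L$, I obtain a regular semi-simple element $\gamma \in \Gamma' \subset \Gamma \cap G^{\circ}$ without components of finite order such that, writing $T := Z_{G^{\circ}}(\gamma)^{\circ}$, every character $\chi \in X(T)$ satisfying $\chi(\gamma) \in L^{\times}$ in fact satisfies $\chi(\gamma) = 1$.

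It then remains to verify the three required properties of this $\gamma$. Since $\gamma \in T(K)$, the observation recorded just before Definition 3.4 gives $\Lambda(\gamma) = \{\chi(\gamma) : \chi \in X(T)\}$, so any element of $\Lambda(\gamma) \cap [\Lambda(\gamma_1)\cdots\Lambda(\gamma_r)]$ is of the form $\chi(\gamma) \in L^{\times}$, and hence equals $1$ by the choice of $\gamma$; this establishes (\ref{E:Y1}). For the torsion-freeness of $\Lambda(\gamma)$ I would invoke the fact (cited right after Theorem \ref{generic}) that $\overline{\langle \gamma \rangle} = T$: if $\chi(\gamma)$ were a root of unity of order $m > 1$ for some nontrivial $\chi \in X(T)$, then since the $m$-th power map $T \to T$ is surjective one would obtain $T = \overline{\langle \gamma^m \rangle} \subset \ker \chi$, contradicting the nontriviality of $\chi$. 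Finally, $\Lambda(\gamma) \neq \{1\}$ because $\gamma \neq 1$ lies in the nontrivial torus $T \subset G^{\circ}$ and characters of a torus separate it from the identity. The main technical obstacle is really the setup in the preceding paragraph --- choosing $L$ both finitely generated (so that Lemma \ref{tezheng} applies) and large enough to contain every product of eigenvalues of the $\gamma_i$'s; once that choice is in place, the conclusions follow by a direct translation through the character-value identification of $\Lambda(\gamma)$.
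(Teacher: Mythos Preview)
Your argument is correct and follows the paper's proof almost verbatim in its overall architecture: reduce to a finitely generated base field, pass to a Zariski-dense subgroup of $G^{\circ}$, adjoin the eigenvalues of the $\gamma_i$ to form $L$, apply Lemma \ref{tezheng}, and read off (\ref{E:Y1}) from the character description of $\Lambda(\gamma)$. The one genuine difference is how you obtain the torsion-freeness of $\Lambda(\gamma)$. The paper invokes Selberg's Lemma to replace $\Gamma \cap G^{\circ}$ by a \emph{neat} finite-index subgroup $\Gamma'$, so that $\Lambda(\delta)$ is torsion-free for \emph{every} $\delta \in \Gamma'$ automatically; you instead extract torsion-freeness directly from the fact that $\langle \gamma \rangle$ is Zariski-dense in $T$, via the observation that $\overline{\langle \gamma^m \rangle} = T$ (which follows from surjectivity of the $m$-th power map on $T$ and continuity). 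Your route avoids the appeal to Selberg's Lemma and is more internal to the generic-element machinery already in play, while the paper's route has the advantage of giving a uniform statement for all elements of $\Gamma'$ at once. Either way the conclusion is the same, and no step in your version is missing or incorrect.
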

\begin{proof}
Since $\Gamma$ is finitely generated, we can assume that the field $K$ is also finitely generated. By Selberg's Lemma (cf. \cite[6.11]{Rag}), we can choose a finite index subgroup $\Gamma' \subset \Gamma \cap G^{\circ}$ which is {\bf neat}, i.e. satisfies the property that for any $\delta \in \Gamma'$ the subgroup $\Lambda(\delta)$ is torsion-free; obviously, $\Gamma'$ is Zariski-dense in $G^{\circ}$.
Let $L$ be the extension of $K$ generated by the eigenvalues of $\gamma_1, \ldots , \gamma_r$, and let $\gamma \in \Gamma'$ be a regular semi-simple element provided by Lemma \ref{tezheng} for the group $G^{\circ}$. To show that this element is as required, we only need to verify condition (\ref{E:Y1}). However, any element  $x \in \Lambda(\gamma) \cap [\Lambda(\gamma_1) \cdots \Lambda(\gamma_r)]$ can be written in the form $\chi(\gamma)$ for some character $\chi$ of the maximal torus $Z_{G^{\circ}}(\gamma)^{\circ}$ and, on the other hand, belongs to $\Lambda(\gamma_1) \cdots \Lambda(\gamma_r) \subset L^{\times}$.
So, $x = 1$ by Lemma \ref{tezheng} completing the argument.
\end{proof}

\vskip.7mm

\noindent {\bf Remark 3.6.} 1. The assertion of Proposition \ref{gp} is {\it false} for virtually solvable subgroups $\Gamma \subset \mathrm{GL}_n(K)$.

\vskip.1mm

2. Using the adjoint representation, it is not difficult to show that any linear group $\Gamma \subset \mathrm{GL}_n(K)$ over a field of characteristic zero with semi-simple connected component of the Zariski-closure contains a finitely generated subgroup having the same Zariski-closure as $\Gamma$. This observation enables one to drop the assumption of finite generation of $\Gamma$ in all statements of this section.

\section{The key matrix statement}\label{S:Case1}

The main results of the paper will be derived in \S\ref{S:main} from the following more general statement that treats bounded generation by individual matrices without any reference to linear groups.
\begin{thm}\label{T:notsubset}
Assume that the matrices $\gamma_1, \ldots , \gamma_r \in \mathrm{GL}_n(\overline{\mathbb{Q}})$, with one possible exception, are semi-simple.
Then for any semi-simple matrix $\gamma \in \mathrm{GL}_n(\overline{\mathbb{Q}})$ that has an eigenvalue $\lambda \in \overline{\mathbb{Q}}^{\times}$ which is not a root of unity and for which $\langle \lambda \rangle \cap [\Lambda(\gamma_1) \ldots \Lambda(\gamma_r)] = \{ 1 \}$ (cf. Definition 3.4), the intersection $\langle \gamma \rangle \cap \langle \gamma_1 \rangle \cdots \langle \gamma_r \rangle$ is finite. In particular, $\langle \gamma \rangle \not\subset \langle \gamma_1 \rangle \cdots \langle \gamma_r \rangle$.
\end{thm}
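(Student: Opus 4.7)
My plan is to pass from the matrix identity $\gamma^m=\gamma_1^{n_1}\cdots\gamma_r^{n_r}$ to a scalar polynomial-exponential identity in the eigenvalues and then apply Laurent's theorem (Theorem \ref{dio}) to pin down $m$. The first step exploits the semi-simplicity of $\gamma$ to extract $\lambda^m$ cleanly. I would choose a column vector $v$ in the $\lambda$-eigenspace of $\gamma$ and a row vector $v^*$ in the $\lambda$-eigenspace of $\gamma^T$, normalized by $v^*v=1$; these exist because for a semi-simple $\gamma$ the natural pairing between corresponding eigenspaces of $\gamma$ and $\gamma^T$ is perfect. Then $v^*\gamma^m v=\lambda^m$ for all $m$, so sandwiching the presumed relation between $v^*$ and $v$ yields $\lambda^m=v^*\gamma_1^{n_1}\cdots\gamma_r^{n_r}v$. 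Expanding each $\gamma_j^{n_j}$ through its Jordan decomposition $\gamma_j=\gamma_j^{(s)}\gamma_j^{(u)}$, with $\gamma_j^{(u)}=I+N_j$ nilpotent and $N_j=0$ except possibly for a single index $j_0$, and multiplying out produces the polynomial-exponential identity
\begin{equation*}
\lambda^m \;=\; \sum_{\alpha\in A}c_\alpha\,P_\alpha(n_{j_0})\prod_{j=1}^r\mu_{j,s_j(\alpha)}^{n_j}, \tag{$\star$}
\end{equation*}
where $A$ is finite, each $c_\alpha\in\overline{\mathbb{Q}}$ is a fixed constant built from $v,v^*$ and the spectral projections of the $\gamma_j$, $\mu_{j,s}$ are the eigenvalues of $\gamma_j$, and $P_\alpha$ is either $1$ (all $\gamma_j$ semi-simple) or a binomial coefficient $\binom{n_{j_0}}{k_\alpha}$.

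The next step is to invoke Theorem \ref{dio} on the finitely generated multiplicative subgroup of $(\overline{\mathbb{Q}}^\times)^{|A|+1}$ in which the tuple $(\lambda^m,\,(\prod_j\mu_{j,s_j(\alpha)}^{n_j})_\alpha)$ varies as $(m,\vec n)$ ranges over $\mathbb{Z}^{r+1}$. Combined with the standard decomposition of vanishing sums into minimal vanishing subsums (the Evertse--van der Poorten--Schlickewei--Schmidt $S$-unit equation theorem), this shows that the solutions of $(\star)$ split into finitely many families, and in each family there is a distinguished $\alpha_0\in A$ and a constant $\xi\in\overline{\mathbb{Q}}^\times$ drawn from a finite list such that
\begin{equation*}
\lambda^m=\xi\,P_{\alpha_0}(n_{j_0})\prod_j\mu_{j,s_j(\alpha_0)}^{n_j}.
\end{equation*}
Taking the ratio of two solutions $(m_1,\vec n_1)$ and $(m_2,\vec n_2)$ lying in the same family yields
\begin{equation*}
\lambda^{m_1-m_2}=\frac{P_{\alpha_0}(n_{j_0,1})}{P_{\alpha_0}(n_{j_0,2})}\cdot\prod_j\mu_{j,s_j(\alpha_0)}^{n_{j,1}-n_{j,2}}.
\end{equation*}
When $P_{\alpha_0}\equiv 1$ the right-hand side lies in $\Lambda(\gamma_1)\cdots\Lambda(\gamma_r)$, so the hypothesis $\langle\lambda\rangle\cap\Lambda(\gamma_1)\cdots\Lambda(\gamma_r)=\{1\}$ together with the fact that $\lambda$ is not a root of unity forces $m_1=m_2$; each family then contributes at most one value of $m$, yielding the claimed finiteness.

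The main obstacle is the non-semi-simple case, in which a factor $\binom{n_{j_0}}{k_{\alpha_0}}$ with $k_{\alpha_0}\geq 1$ obstructs a direct comparison inside $\Lambda(\gamma_1)\cdots\Lambda(\gamma_r)$. I would handle this by observing that $\xi\,P_{\alpha_0}(n_{j_0})$ equals $\lambda^m\big/\prod_j\mu_{j,s_j(\alpha_0)}^{n_j}$, which lies in the finitely generated multiplicative group of eigenvalues and is therefore an $S$-unit for a suitable finite set $S$. Since a nonconstant polynomial takes $S$-unit values at only finitely many integer arguments (by Siegel's theorem on $S$-integral points of $\mathbb{P}^1\setminus\{0,\infty\}$), only finitely many $n_{j_0}$ can occur per family, reducing the argument to the previous case. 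Alternatively, one could apply a polynomial-exponential enhancement of Laurent's theorem (in the spirit of Zannier's or Schlickewei--Schmidt's work) directly to $(\star)$.
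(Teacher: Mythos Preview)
Your treatment of the purely semi-simple case (all $\gamma_i$ semi-simple) is correct and parallels the paper's argument: both extract the scalar identity $\lambda^m = p(\mu_{ij}^{n_j})$ and feed it into Laurent's theorem. The paper packages this via Proposition~\ref{PP:1}, while you invoke the vanishing-subsum decomposition from the $S$-unit equation theorem; these are equivalent routes to the same conclusion.

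The genuine gap is in Case 2 (one non-semi-simple $\gamma_{j_0}$). Your assertion that ``the solutions of $(\star)$ split into finitely many families'' with a distinguished relation $\lambda^m = \xi\, P_{\alpha_0}(n_{j_0})\prod_j \mu_{j,s_j(\alpha_0)}^{n_j}$ is not justified: the summands $c_\alpha P_\alpha(n_{j_0})\prod_j \mu^{n_j}$ are \emph{not} $S$-units, because the polynomial factor $P_\alpha(n_{j_0})$ destroys the multiplicative structure. Neither Laurent's theorem (which concerns subsets of a finitely generated subgroup of a torus) nor the Evertse--Schlickewei--Schmidt theorem applies to such a sum as stated; the tuple $(\lambda^m,(\prod_j\mu^{n_j})_\alpha)$ does not lie on any fixed subvariety of the torus, since the linear relation it satisfies changes with $n_{j_0}$. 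Even if one granted the family decomposition, your Siegel step fails when $P_{\alpha_0}$ is linear, e.g.\ $\binom{n_{j_0}}{1}=n_{j_0}$: the curve $\mathbb{P}^1\setminus\{0,\infty\}$ has only two punctures, so Siegel gives nothing, and indeed there are infinitely many integers that are $S$-units once $S$ contains a finite prime. The vague appeal to a ``polynomial-exponential enhancement of Laurent'' does not fill this gap.

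The paper handles Case 2 by a different device that sidesteps polynomial-exponential equations entirely. Since $g^{-1}\gamma^m g$ is \emph{diagonal}, not only does its $(1,1)$-entry equal $\lambda^m$, but every off-diagonal entry $p_{\alpha\beta}(x_{ij},z)$ of the product (with $z$ playing the role of $n_{j_0}$) must vanish at the relevant specializations. A short lemma, using that $\gamma_{j_0}$ is a \emph{nontrivial} unipotent, shows that for some $\alpha\neq\beta$ the polynomial $p_{\alpha\beta}$ genuinely depends on $z$ at infinitely many $m\in J$. Taking the resultant in $z$ of $q(x,x_{ij},z)=x-p_{11}(x_{ij},z)$ against a suitable truncation $\tilde p_{\alpha\beta}(x_{ij},z)$ eliminates $z$ and yields a polynomial $f(x,x_{ij})$, nonconstant in $x$, that vanishes at $(\lambda^m,\mu_{ij}^{n_j})$ for infinitely many $m$. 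Proposition~\ref{PP:1} (i.e.\ Laurent) then applies cleanly. The idea you are missing is to exploit the \emph{extra} algebraic equations furnished by the off-diagonal vanishing to eliminate the additive variable $n_{j_0}$ before invoking Laurent, rather than trying to push Laurent or Siegel through a mixed polynomial-exponential relation.
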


\vskip1mm

\noindent {\bf 4A. Laurent's theorem and one application.} The proof of  Theorem \ref{T:notsubset} critically depends on the following result from Diophantine geometry -- see \cite[Theorem 2.7]{CorvajaZannier}.

\begin{thm}[Laurent's theorem]\label{dio} Let $\Omega$ be a finitely generated subgroup of $(\overline{\mathbb{Q}}^{\times})^N$ for some $N >0$, and let $\Sigma \subset \Omega$ be a subset. Then the Zariski closure of $\Sigma$ in the torus $T = (\mathbb{G}_m)^N$ is  a finite union of translates of algebraic subgroups of $T$.
\end{thm}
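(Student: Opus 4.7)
The plan is to argue by contradiction: suppose $\langle \gamma \rangle \cap \langle \gamma_1 \rangle \cdots \langle \gamma_r \rangle$ is infinite. Since $\lambda$ is not a root of unity, $\gamma$ has infinite order, so this produces an infinite set $M \subset \mathbb{Z}$ and integers $k_i^{(m)}$ satisfying $\gamma^m = \gamma_1^{k_1^{(m)}} \cdots \gamma_r^{k_r^{(m)}}$ for all $m \in M$. To distill this matrix identity into a single scalar one, I would diagonalize each semi-simple $\gamma_i = P_i D_i P_i^{-1}$ with $D_i = \mathrm{diag}(\lambda_{i,1}, \ldots, \lambda_{i,n})$, put the at most one non-semi-simple exception $\gamma_{i_0}$ in multiplicative Jordan form (so that entries of $\gamma_{i_0}^{k_{i_0}}$ are $\overline{\mathbb{Q}}$-linear combinations of terms $Q(k_{i_0})\lambda_{i_0,j}^{k_{i_0}}$ with $Q$ polynomial of degree at most $n-1$), then apply a left eigenvector $v$ of $\gamma$ with eigenvalue $\lambda$ to both sides and read off a coordinate where $v$ is nonzero. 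This yields
\begin{equation*}
\lambda^m = \sum_{(j_1, \ldots, j_r)} P_{j_1, \ldots, j_r}(k_{i_0}) \prod_{i=1}^{r} \lambda_{i, j_i}^{k_i}, \tag{$\ast$}
\end{equation*}
where each $P_{j_\cdot}$ is a polynomial, reducing to a constant when every $\gamma_i$ is semi-simple.

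The second step feeds $(\ast)$ into Laurent's theorem. I would embed
\[
\iota \colon (m, k_1, \ldots, k_r) \longmapsto \bigl(\lambda^m,\, (\lambda_{i,j}^{k_i})_{i,j}\bigr) \in T := (\mathbb{G}_m)^{1+nr},
\]
so that the image of $\mathbb{Z}^{r+1}$ is a finitely generated subgroup $\Omega$ of $T(\overline{\mathbb{Q}})$. In the purely semi-simple case $(\ast)$ defines a hypersurface $V \subset T$ containing the infinite subset $\Sigma \subset \Omega$ of images of our solutions, and Theorem~\ref{dio} shows that the Zariski closure of $\Sigma$ is a finite union $\bigcup_\alpha t_\alpha S_\alpha$ of translates of subtori, each contained in $V$. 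Because the set of $m$-values is infinite and each translate with finite first-coordinate projection contributes only finitely many $m$-values (using again that $\lambda$ has infinite order), at least one $t_0 S$ must have its first-coordinate projection onto $\mathbb{G}_m$ surjective and must contain infinitely many points of $\Sigma$ with pairwise distinct $m$-values.

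The third step extracts the desired multiplicative relation. Translating by $t_0^{-1}$ turns the inclusion $t_0 S \subset V$ into the vanishing on $S$ of a Laurent polynomial of the form $x - \sum_{j_\cdot} \tilde c_{j_\cdot} \prod_i y_{i, j_i}$. Passing to the coordinate ring $\overline{\mathbb{Q}}[X(S)] = \overline{\mathbb{Q}}[X(T)/L]$ (with $L \subset X(T)$ the sublattice of characters trivial on $S$) and grouping monomials by their class mod $L$, the coefficient $+1$ of the $x$-monomial cannot be cancelled unless some tuple $(j_1^*, \ldots, j_r^*)$ satisfies $x \equiv \sum_i y_{i, j_i^*} \pmod{L}$. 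Equivalently, the quantity $\lambda^m / \prod_i \lambda_{i, j_i^*}^{k_i}$ takes a single constant value over every point of $\Sigma$ lying on $t_0 S$. Picking two such points with $m \neq m'$ then yields
\[
\lambda^{m - m'} = \prod_{i=1}^{r} \lambda_{i, j_i^*}^{k_i - k_i'} \in \Lambda(\gamma_1) \cdots \Lambda(\gamma_r),
\]
a nontrivial element since $\lambda$ is not a root of unity and $m \neq m'$, contradicting the hypothesis $\langle \lambda \rangle \cap [\Lambda(\gamma_1) \cdots \Lambda(\gamma_r)] = \{1\}$.

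The main obstacle is handling the polynomial factor $P_{j_\cdot}(k_{i_0})$ that enters $(\ast)$ when one of the $\gamma_i$ fails to be semi-simple, since Theorem~\ref{dio} as stated applies only to pure exponential data. I would address this either by invoking a polynomial-exponential strengthening of Laurent's theorem (as developed by Evertse--Schlickewei--Schmidt or Corvaja--Zannier), or by a case split: if $k_{i_0}^{(m)}$ stays bounded on our infinite family, pigeonhole to a fixed value of $k_{i_0}$ so that the unipotent part $u_{i_0}^{k_{i_0}}$ becomes a constant matrix to be absorbed, reducing to the purely semi-simple analysis; if $k_{i_0}^{(m)}$ is unbounded, one must extract the top-degree polynomial contribution to recover a purely exponential relation to which Theorem~\ref{dio} applies. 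In either scenario the geometric analysis of the containment $t_0 S \subset V$ proceeds as above, yielding the same multiplicative contradiction.
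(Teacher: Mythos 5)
The statement you were asked to prove is Laurent's theorem itself (Theorem \ref{dio}): the Zariski closure of an arbitrary subset of a finitely generated subgroup of $(\overline{\mathbb{Q}}^{\times})^N$ is a finite union of translates of algebraic subgroups of $(\mathbb{G}_m)^N$. Your write-up does not address this statement. What you have sketched is a proof of the downstream application, Theorem \ref{T:notsubset} (finiteness of $\langle \gamma \rangle \cap \langle \gamma_1 \rangle \cdots \langle \gamma_r \rangle$), and in your second step you explicitly invoke Theorem \ref{dio} as a known input. As a proof of Theorem \ref{dio} this is circular: the theorem occurs as an ingredient of its own purported proof, and no independent argument for it is offered. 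Note that the paper itself does not prove Laurent's theorem either; it is imported from \cite[Theorem 2.7]{CorvajaZannier}. An actual proof is a serious piece of Diophantine approximation: after reducing to a ``non-degenerate'' configuration one needs the finiteness of non-degenerate solutions of $S$-unit equations, which rests on the Evertse--Schlickewei--van der Poorten circle of results and ultimately on Schmidt's Subspace Theorem. Nothing of that nature appears in, or could be supplied by, the diagonalization, translation-by-$t_0^{-1}$, and character-lattice manipulations in your sketch.

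As a secondary remark, even read as an account of how Laurent's theorem gets used, your sketch corresponds to the paper's proof of Theorem \ref{T:notsubset} (via Proposition \ref{PP:1}) only loosely, and the one non-semi-simple factor is exactly where it is vaguest: the paper eliminates the integer variable coming from the unipotent part by a resultant of $q$ with a carefully truncated polynomial $\tilde{p}_{\alpha\beta}$ chosen so that its leading coefficient does not vanish along an infinite subfamily, whereas your ``extract the top-degree polynomial contribution'' in the unbounded case is not an argument, and appealing instead to a polynomial-exponential strengthening of Laurent's theorem would again be citing, not proving, the Diophantine input. But the decisive point is the mismatch above: the statement at hand is the Diophantine theorem itself, and your proposal assumes it.
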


\vskip1mm

We will now establish the following important application of Laurent's theorem.

\begin{prop}\label{PP:1}
Suppose we are given a polynomial $f(x, x_1, \ldots , x_d) \in \overline{\mathbb{Q}}[x, x_1, \ldots , x_d]$ and algebraic numbers  $\mu, \mu_1, \ldots , \mu_d \in \overline{\mathbb{Q}}^{\times}$, with $\mu$ \underline{not} a root of unity. Assume that
$$
\langle \mu \rangle \cap \langle \mu_1, \ldots , \mu_d \rangle = \{ 1 \}.
$$
Then the set $M$ of integers $m \in \mathbb{Z}$ such that there exist $a_1(m), \ldots , a_d(m) \in \mathbb{Z}$ for which
$$
f\left(\mu^m, \mu_1^{a_1(m)}, \ldots , \mu_d^{a_d(m)}\right) = 0 \ \ \text{and} \ \ f\left(x, \mu^{a_1(m)}, \ldots , \mu_d^{a_d(m)}\right) \ \ \text{is non-constant},
$$
is finite.
\end{prop}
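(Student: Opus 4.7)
The plan is to apply Laurent's theorem (Theorem \ref{dio}) to a carefully chosen set of points in a multiplicative torus. Set $T := (\mathbb{G}_m)^{d+1}$, let $V(f) \subset T$ be the zero locus of $f$, and let $\Omega \subset T(\overline{\mathbb{Q}})$ be the finitely generated subgroup consisting of all tuples $(\mu^m, \mu_1^{a_1}, \ldots, \mu_d^{a_d})$ with $(m, a_1, \ldots, a_d) \in \mathbb{Z}^{d+1}$. Put $\Sigma := \Omega \cap V(f)$. By Theorem \ref{dio}, the Zariski closure $\overline{\Sigma}$ of $\Sigma$ in $T$ is a finite union of cosets $c_j \cdot T_j'$ of subtori $T_j' \subseteq T$, each contained in $V(f)$; after discarding redundant pieces, we may also assume each such coset meets $\Sigma$. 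Since every $m \in M$ produces a point $\bar{y}_m := (\mu^m, \mu_1^{a_1(m)}, \ldots, \mu_d^{a_d(m)}) \in \Sigma$ lying in some such coset, it suffices to show that each coset accounts for at most one value of $m$.

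Fix a coset $c \cdot T' \subseteq V(f)$, and consider the one-parameter subgroup $U := \mathbb{G}_m \times \{(1, \ldots, 1)\} \subset T$. If $T' \supseteq U$, then $c \cdot T'$ is $U$-invariant, so the presence of any point $\bar{y}_m = (\mu^m, b) \in c \cdot T'$, with $b := (\mu_1^{a_1(m)}, \ldots, \mu_d^{a_d(m)})$, drags the entire line $\mathbb{G}_m \times \{b\}$ into $V(f)$; this forces $f(x, \mu_1^{a_1(m)}, \ldots, \mu_d^{a_d(m)})$ to be the zero polynomial, violating the non-constancy clause in the definition of $M$. Hence no $m \in M$ is contributed by such a coset. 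In the complementary case $T' \not\supseteq U$, the character lattice annihilating $T'$ contains some $(b_0, b_1, \ldots, b_d) \in \mathbb{Z}^{d+1}$ with $b_0 \neq 0$, so $c \cdot T' \subseteq \{y_0^{b_0} y_1^{b_1} \cdots y_d^{b_d} = c'\}$ for some $c' \in \overline{\mathbb{Q}}^{\times}$; because $c \cdot T'$ meets $\Omega$, the scalar $c'$ itself lies in $\Omega$ and can be written $c' = \mu^{e_0} w$ with $e_0 \in \mathbb{Z}$ and $w \in \langle \mu_1, \ldots, \mu_d \rangle$. Evaluating the defining relation on $\bar{y}_m$ yields
$$
\mu^{b_0 m - e_0} \;=\; w \cdot \mu_1^{-b_1 a_1(m)} \cdots \mu_d^{-b_d a_d(m)} \;\in\; \langle \mu_1, \ldots, \mu_d \rangle,
$$
which by the hypothesis $\langle \mu \rangle \cap \langle \mu_1, \ldots, \mu_d \rangle = \{1\}$ and the fact that $\mu$ is not a root of unity forces $b_0 m = e_0$. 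Thus $m$ is uniquely determined by the coset, and $M$ is finite.

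The main technical obstacle is packaging the raw data of the proposition so that Laurent's theorem applies and yields a usable output: the ambient torus must be chosen so that the hypothesis on $f$ and the hypothesis on the multiplicative relations between $\mu$ and the $\mu_i$ both have a role to play, and the dichotomy on the cosets must be exactly the one above. Concretely, the non-constancy hypothesis on $f$ is used only to rule out ``vertical'' cosets (those containing the first-coordinate subgroup $U$), while the multiplicative-independence hypothesis $\langle \mu \rangle \cap \langle \mu_1, \ldots, \mu_d \rangle = \{1\}$ is used only in the remaining ``non-vertical'' case. The one point requiring mild care is verifying that each coset appearing in the Zariski closure genuinely contains points of $\Sigma$, so that $c' \in \Omega$; this is standard once one refines the Laurent decomposition to its irreducible components.
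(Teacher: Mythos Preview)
Your proof is correct and follows essentially the same approach as the paper's: apply Laurent's theorem to a set of points in $(\mathbb{G}_m)^{d+1}$, split the resulting cosets according to whether the subgroup contains the first-coordinate $\mathbb{G}_m$, use the non-constancy hypothesis to dispose of the cosets that do, and use the multiplicative-independence hypothesis together with a character with nonzero first exponent for the rest. The only cosmetic difference is that the paper takes $\Sigma$ to consist of one chosen point $\sigma(m)$ per $m\in M$ and then uses pigeonhole (two distinct $m$'s in the same coset give a ratio lying in $T_\ell$, whence $\mu^{k(m_2-m_1)}\in\langle\mu_1,\ldots,\mu_d\rangle$), whereas you take $\Sigma=\Omega\cap V(f)$ and argue directly that the coset constant $c'$ lies in $\chi(\Omega)$, pinning $m$ down uniquely; both routes produce the same relation and the same contradiction.
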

\begin{proof}
For each $m \in M$ we {\it fix} a $d$-tuple $(a_1(m), \ldots , a_d(m)) \in \mathbb{Z}^d$ as in the above description of $M$. Let $N = 1 + d$ and $T = (\mathbb{G}_m)^N$ be an $N$-dimensional $\mathbb{Q}$-split torus, the coordinate functions on which will be denoted $x, x_1, \ldots , x_d$. Furthermore, we let $\Omega$ denote the subgroup $\langle \mu \rangle \times \langle \mu_1 \rangle \times \cdots \times \langle \mu_d \rangle$ of $T(\overline{\mathbb{Q}}) = (\overline{\mathbb{Q}}^{\times})^N$; clearly, $\Omega$ is finitely generated. Consider the subset $\Sigma \subset \Omega$ consisting of
the elements $\sigma(m) := \left( \mu^m, \mu_1^{a_1(m)}, \ldots , \mu_d^{a_d(m)} \right)$ for  $m \in M$. By Laurent's Theorem \ref{dio}, the Zariski-closure $\overline{\Sigma}$ of $\Sigma$ in $T$ is of the form
\begin{equation}\label{EE:3}
\overline{\Sigma} = \bigcup_{\ell = 1}^b s_{\ell}T_{\ell}
\end{equation}
for some $s_{\ell} \in T(\overline{\mathbb{Q}})$ and some algebraic subgroups $T_{\ell}$ of $T$ for $\ell = 1, \ldots , b$.  We may assume that this decomposition of $\overline{\Sigma}$ is minimal, i.e. none of the cosets can be dropped.
\begin{lemma}\label{LL:1}
None of the subgroups $T_{\ell}$ is of the form $\mathbb{G}_m \times T'_{\ell}$ for some algebraic subgroup $T'_{\ell}$ of $(\mathbb{G}_m)^{N-1}$ (last
$(N-1)$ components).
\end{lemma}
\begin{proof}
By construction, the polynomial $f(x, x_1, \ldots , x_d)$ vanishes on $\Sigma$, hence also vanishes on $\overline{\Sigma}$. In particular, it vanishes on each coset $s_{\ell}T_{\ell}$. Suppose that for some $\ell \in \{ 1, \ldots , b \}$ we have $T_{\ell} = \mathbb{G}_m \times T'_{\ell}$. Then
\begin{equation}\label{EE:4}
s_{\ell} T_{\ell} = \mathbb{G}_m \times (s'_{\ell} T'_{\ell}),
\end{equation}
where $s'_{\ell}$ is the projection of $s_{\ell}$ to $(\mathbb{G}_m)^{N-1}$. The minimality of (\ref{EE:3}) implies that $s_{\ell}T_{\ell}$ contains an element  $\left(\mu^m, \mu_1^{a_1(m)}, \ldots , \mu_d^{a_d(m)}\right) \in \Sigma$. Due to (\ref{EE:4}), this means that $f$ vanishes on $\mathbb{G}_m \times \left\{\left(\mu_1^{a_1(m)}, \ldots , \mu_d^{a_d(m)}\right)\right\}$. But this is impossible since by our assumption the polynomial $f\left(x, \mu_1^{a_1(m)}, \ldots , \mu_d^{a_d(m)}\right)$ is nonconstant.   \end{proof}

Now, if $M$ is infinite, then by the pigeonhole principle, one can find $m_1 , m_2 \in M$, $m_1 \neq m_2$, such that the elements $\sigma(m_1)$ and $\sigma(m_2)$ belong to the {\it same} coset $s_{\ell} T_{\ell}$. Then
\begin{equation}\label{EE:1}
\sigma(m_2)\sigma(m_1)^{-1} = \left( \mu^{m_2 - m_1}, \mu_1^{a_1(m_2) - a_1(m_1)}, \ldots , \mu_d^{a_d(m_1) - a_d(m_2)}   \right)
\end{equation}
belongs to $T_{\ell}$. According to \cite[Ch. III, 8.2]{borel}, the subgroup $T_{\ell}$ is the intersection of the kernels of all characters of $T$ that vanish on it. Writing a character $\chi \in X(T)$ as
\begin{equation}\label{EE:2}
\chi(x, x_1, \ldots , x_d) = x^k x_1^{k_1} \cdots x_d^{k_d},
\end{equation}
we observe that if $k = 0$ then $\ker \chi$ is of the form $\mathbb{G}_m \times T'$. So, it follows from Lemma \ref{LL:1} that there exists a character $\chi \in X(T)$ that vanishes on $T_{\ell}$ and for which in the corresponding presentation (\ref{EE:2}) we have $k \neq 0$. Furthermore, since $\sigma(m_2)\sigma(m_1)^{-1} \in T_{\ell}$, it follows from (\ref{EE:1}) that
$$
\mu^{k(m_2 - m_1)} \in \langle \mu_1, \ldots , \mu_d \rangle.
$$
We obtain a contradiction completing thereby the proof of Proposition \ref{PP:1}.
\end{proof}

\vskip1mm

\noindent {\bf 4B. The two cases in the proof of Theorem \ref{T:notsubset}.} To prove the theorem, it is enough to consider the following two cases.

\vskip2mm

\noindent {\bf Case 1.} {\it All elements $\gamma_1, \ldots , \gamma_r$ are semi-simple.}

\vskip1mm

\noindent {\bf Case 2.} {\it For some $s \in \{1, \ldots , r\}$, the elements $\gamma_1, \ldots , \gamma_{s-1}, \gamma_{s+1}, \ldots \gamma_r$ are semi-simple and the element $\gamma_s \neq 1$ is unipotent.}

\vskip2mm

Indeed, in view of the assumptions made in the statement of Theorem \ref{T:notsubset}, the only situation not covered by Case 1 is  where among the elements $\gamma_1, \ldots , \gamma_r$  exactly one element, say $\gamma_s$ $(1 \leq s \leq r)$, is not semi-simple. The Jordan decomposition $\gamma_s = \sigma \upsilon$, where $\sigma$ and $\upsilon$  are commuting matrices with $\sigma$ semi-simple and $\upsilon$ unipotent, leads to the inclusion
$$
\langle \gamma_1 \rangle \cdots \langle \gamma_r \rangle \ \subset \ \langle \gamma_1 \rangle \cdots \langle \gamma_{s-1} \rangle \langle \sigma \rangle \langle \upsilon \rangle \langle \gamma_{s+1} \rangle \cdots \langle \gamma_r \rangle.
$$
Thus, it is enough to prove that under the assumptions made in Theorem \ref{T:notsubset}, the intersection $$\langle \gamma \rangle \cap \left[ \langle \gamma_1 \rangle \cdots \langle \gamma_{s-1} \rangle \langle \sigma \rangle \langle \upsilon \rangle \langle \gamma_{s+1} \rangle \cdots \langle \gamma_r \rangle\right]$$ is finite. On the other hand, we have
$\Lambda(\sigma) = \Lambda(\gamma_s)$ and $\Lambda(\upsilon) = \{ 1 \}$, so the assumption on $\lambda$ in the theorem translates into
$$
\langle \lambda \rangle \cap \left[ \Lambda(\gamma_1) \cdots \Lambda(\gamma_{s-1}) \Lambda(\sigma) \Lambda(\upsilon) \Lambda(\gamma_{s+1}) \cdots \Lambda(\gamma_r)\right] = \{ 1 \}.
$$
Clearly, after re-labeling the elements $\gamma_1, \ldots , \gamma_{s-1}, \sigma, \upsilon, \gamma_{s+1}, \ldots , \gamma_r$  precisely fit the situation
considered in Case 2. Thus, the combination of Cases 1 and 2 covers all possibilities in Theorem \ref{T:notsubset}.

\vskip2mm

\noindent {\bf 4C. Proof of Theorem \ref{T:notsubset} in Case 1.} As usual, we let $\mathrm{diag}(u_1, \ldots , u_n)$ denote the diagonal matrix with  diagonal entries $u_1, \ldots , u_n$. Since $\gamma, \gamma_1, \ldots , \gamma_r$ are semi-simple, there exist $g, g_1, \ldots , g_r \in \mathrm{GL}_n(\overline{\mathbb{Q}})$ such that
$$
g^{-1}\gamma g = \mathrm{diag}(\lambda_1, \ldots , \lambda_n) \ \ \text{and} \ \ g_i^{-1} \gamma_i g_i = \mathrm{diag}(\lambda_{i1}, \ldots , \lambda_{in}) \ \ \text{for} \ \ i = 1, \ldots , r.
$$
Besides, we may assume that $\lambda = \lambda_1$. We introduce $d = rn$ indeterminates $x_{11}, \ldots , x_{1n}, \ldots , x_{r1}, \ldots , x_{rn}$, and let $p(x_{11}, \ldots , x_{rn})  \in \overline{\mathbb{Q}}[x_{11}, \ldots , x_{rn}]$ denote the polynomial representing
the $(1,1)$-entry of the matrix
$$
g^{-1} \cdot \left[  \prod_{i = 1}^r \left(g_i \cdot \mathrm{diag}(x_{i1}, \ldots , x_{in}) \cdot g_i^{-1}\right)  \right] \cdot g.
$$
We set $f(x, x_{11}, \ldots , x_{rn}) = x - p(x_{11}, \ldots , x_{rn})$ observing that the polynomial $f(x, x_{11}^0, \ldots , x_{rn}^0)$ is non-constant for any $x_{11}^0, \ldots , x_{rn}^0 \in \overline{\mathbb{Q}}$.

Now, let $J = \{\, m \in \mathbb{Z} \ \vert \ \gamma^m \in \langle \gamma_1 \rangle \cdots \langle \gamma_r \rangle \, \}$. Then for each $m \in J$ we can make a choice of integers $a_1(m), \ldots , a_r(m)$ so that
$$
\gamma^m = \gamma_1^{a_1(m)} \cdots \gamma_r^{a_r(m)},
$$
and consequently
$$
\lambda^m = p\left(\lambda_{11}^{a_1(m)}, \ldots , \lambda_{1n}^{a_1(m)}, \ldots , \lambda_{r1}^{a_r(m)}, \ldots , \lambda_{rn}^{a_r(m)}\right).
$$
Then
$$
f\left(\lambda^m, \lambda_{11}^{a_1(m)}, \ldots , \lambda_{rm}^{a_r(m)}\right) = 0.
$$
This means that $J$ is contained in the set $M$ constructed in Proposition \ref{PP:1} for the polynomial $f$ and taking $\mu$ to be $\lambda$ and $\mu_1, \ldots , \mu_d$ to be $\lambda_{11}, \ldots , \lambda_{rn}$. We note that then
$$
\langle \mu_1, \ldots , \mu_d \rangle = \Lambda(\gamma_1) \cdots \Lambda(\gamma_r).
$$
It follows that the assumptions of Proposition \ref{PP:1} do hold in our situation, so $M$ is finite. Therefore, $J$ is also finite, and our claim follows.

\vskip2mm

\noindent {\bf 4D. Proof of Theorem \ref{T:notsubset} in Case 2.} We now assume that the elements $\gamma_1, \ldots , \gamma_{s-1}, \gamma_{s+1}, \ldots , \gamma_r$ are semi-simple and the element $\gamma_s \neq 1$ is unipotent. Again, we need to show that the set $$J = \left\{ m \in \mathbb{Z} \ \vert \ \gamma^m \in \langle \gamma_1 \rangle \cdots \langle \gamma_r \rangle \right\}$$ is finite. Emulating the construction in 4C, we find $g, g_1, \ldots , g_{s-1}, g_{s+1}, \ldots , g_r \in \mathrm{GL}_n(\overline{\mathbb{Q}})$ so that
$$
g^{-1} \gamma g = \mathrm{diag}(\lambda_1, \ldots , \lambda_n) \ \ \text{and} \ \ g_i^{-1} \gamma_i g_i = \mathrm{diag}(\lambda_{i1}, \ldots ,
\lambda_{in}) \ \ \text{for} \ \ i = 1, \ldots , s-1, s+1, \ldots , r,
$$
where we may assume that $\lambda = \lambda_1$. On the other hand, $\gamma_s = I_n + \nu$ where $\nu \neq 0$ and $\nu^n = 0$. Then by the binomial expansion, for any $m \in \mathbb{Z}$ we have
$
\displaystyle \gamma_s^m = \sum_{k = 0}^{n-1} {m \choose k} \nu^k
$
where as usual
$$
{m \choose k}  = \frac{m(m-1) \cdots (m - k + 1)}{k!} \ \ \text{for} \ \ k \geq 1 \ \ \text{and} \ \ {m \choose 0} = 1 \ \ \text{for any} \ \ m \in \mathbb{Z}.  $$
It follows that there exists an $(n\times n)$-matrix $A(z)$ with entries in $\overline{\mathbb{Q}}[z]$ such that  $\gamma_s^m = A(m)$ for all $m \in \mathbb{Z}$. We now introduce the indeterminates $x_{ij}$ for $i \in \{1, \ldots , s-1, s+1, \ldots , r\}$ and $j \in \{1, \ldots , n\}$. Then for any $\alpha , \beta \in \{1, \ldots , n\}$ we let $p_{\alpha\beta}(x_{ij} , z)$ denote the polynomial in $\overline{\mathbb{Q}}[x_{11}, \ldots , x_{rn}, z]$ that represent the $(\alpha , \beta)$-entry of the matrix
$$
P(x_{ij} , z) := g^{-1} \cdot \left[ \prod_{i = 1}^{s-1}\left(g_i \cdot \mathrm{diag}(x_{i1}, \ldots , x_{in}) \cdot g_i^{-1}\right) \cdot A(z) \cdot
 \prod_{i = s+1}^{r}\left(g_i \cdot \mathrm{diag}(x_{i1}, \ldots , x_{in}) \cdot g_i^{-1}\right)     \right] g.
$$
Set $q(x, x_{ij}, z) := x - p_{11}(x_{ij}, z)$. For each $m \in J$ we can fix $a_1(m), \ldots , a_r(m) \in \mathbb{Z}$ such that
%\begin{equation}\label{EEE:1}
$$
\gamma^m = \gamma_1^{a_1(m)} \cdots \gamma_r^{a_r(m)}.
$$
%\end{equation}
Then for all $m \in J$ we have
\begin{equation}\label{EEE:2}
q\left(\lambda^m, \lambda_{11}^{a_1(m)}, \ldots , \lambda_{rn}^{a_r(m)}, a_s(m)\right) = 0
\end{equation}
Furthermore, since the matrix $g^{-1} \gamma g$ is diagonal, we in addition have
\begin{equation}\label{EEE:3}
p_{\alpha\beta}\left(\lambda_{11}^{a_1(m)}, \ldots , \lambda_{rn}^{a_r(m)}, a_s(m)\right) = 0 \ \ \text{whenever} \ \ \alpha \neq \beta,
\end{equation}
for all $m \in J$. Assume that $J$ is infinite. To obtain a contradiction, we will eliminate $z$ from the pair of polynomials $q(x, x_{ij}, z)$ and $\tilde{p}_{\alpha\beta}(x_{ij}, z)$ for {\it some} $\alpha \neq  \beta$, where $\tilde{p}_{\alpha\beta}$ is a suitable truncation of $p_{\alpha\beta}$, to generate a polynomial $f(x, x_{ij})$ having the following property: the set $M$ constructed  in Proposition \ref{PP:1} for this $f$ by taking $\mu = \lambda$ and $\mu_1, \ldots , \mu_d$ to be $\lambda_{11}, \ldots , \lambda_{rn}$ contains an infinite subset $J' \subset J$. It will be obvious that the assumptions of the proposition hold in our situation which will imply  $M$ is actually finite and provide the required contradiction.

We will now pick a suitable pair of indices $\alpha , \beta \in \{1, \ldots , n\}$, $\alpha \neq \beta$, and construct a required truncation $\tilde{p}_{\alpha\beta}(x_{ij}, z)$. For any such pair we write
$$
p_{\alpha\beta}(x_{ij}, z) = \psi^{\alpha\beta}_{t_{\alpha\beta}}(x_{ij}) z^{t_{\alpha\beta}} + \cdots + \psi^{\alpha\beta}_0(x_{ij}) \ \ \text{with} \ \
\psi^{\alpha\beta}_k(x_{ij}) \in \overline{\mathbb{Q}}[x_{ij}],
$$
and then for $k = 1, \ldots , t_{\alpha\beta}$ set
$$
\Psi^{\alpha\beta}_k = \left\{ m \in J \ \vert \  \psi^{\alpha\beta}_k\left(\lambda_{11}^{a_1(m)}, \ldots , \lambda_{rn}^{a_r(m)}\right) = 0 \right\}.
$$
\begin{lemma}
$\displaystyle \bigcap_{\alpha \neq \beta} \left( \bigcap_{k = 1}^{t_{\alpha\beta}} \Psi^{\alpha\beta}_k   \right) = \varnothing$.
\end{lemma}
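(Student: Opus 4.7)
The plan is to argue by contradiction and exploit the fact that $\gamma_s$ is unipotent through the identity $\det(A(z)) \equiv 1$. Suppose some $m \in J$ lies in the intersection in question. By the definition of $\Psi^{\alpha\beta}_k$, for every off-diagonal pair $\alpha \neq \beta$ and every $k \geq 1$ the coefficient $\psi^{\alpha\beta}_k$ vanishes at the specialization $x_{ij} = \lambda_{ij}^{a_i(m)}$, so the polynomial $p_{\alpha\beta}(\lambda_{ij}^{a_i(m)}, z) \in \overline{\mathbb{Q}}[z]$ reduces to the constant $\psi^{\alpha\beta}_0(\lambda_{ij}^{a_i(m)})$. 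Evaluating this constant at $z = a_s(m)$ and invoking (\ref{EEE:3}) shows it equals zero. Consequently $p_{\alpha\beta}(\lambda_{ij}^{a_i(m)}, z) \equiv 0$ for every $\alpha \neq \beta$, so the matrix $D(z) := P(\lambda_{ij}^{a_i(m)}, z)$ is a diagonal matrix with entries in $\overline{\mathbb{Q}}[z]$.

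I next unpack the definition of $P$: with $U := \gamma_1^{a_1(m)} \cdots \gamma_{s-1}^{a_{s-1}(m)}$ and $V := \gamma_{s+1}^{a_{s+1}(m)} \cdots \gamma_r^{a_r(m)}$ (invertible matrices independent of $z$) one has $g D(z) g^{-1} = U A(z) V$. Taking determinants gives $\det D(z) = \det(UV) \cdot \det A(z)$. The crux is the determinantal identity $\det A(z) \equiv 1$: indeed $A(t) = \gamma_s^t$ for every $t \in \mathbb{Z}$, and $\det \gamma_s = 1$ since $\gamma_s$ is unipotent, so the polynomial $\det A(z) \in \overline{\mathbb{Q}}[z]$ takes the value $1$ on the infinite set $\mathbb{Z}$ and must therefore be identically $1$. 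Hence $\det D(z)$ equals the nonzero constant $\det(UV)$.

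Writing $D(z) = \mathrm{diag}(d_1(z), \ldots, d_n(z))$, the relation $\prod_i d_i(z) \in \overline{\mathbb{Q}}^{\times}$ in the UFD $\overline{\mathbb{Q}}[z]$ forces each $d_i(z)$ to be a nonzero constant. Therefore $D(z) = D(0)$ is independent of $z$, whence $U A(z) V = U A(0) V = UV$ for every $z$. Invertibility of $U$ and $V$ then yields $A(z) = I_n$ identically, contradicting $A(1) = \gamma_s \neq I_n$. This contradiction proves the lemma.

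The main subtle point I anticipate is to combine the hypothesis (vanishing of $\psi^{\alpha\beta}_k$ only for $k \geq 1$) with (\ref{EEE:3}) in order to obtain the full vanishing of the off-diagonal entries as polynomials in $z$; the determinantal identity is then the linchpin, and it is precisely what makes the unipotency of $\gamma_s$ indispensable. Were $\gamma_s$ merely semi-simple, $\det A(z)$ would typically fail to be the constant $1$ and the reduction to constancy of $D(z)$ would collapse, explaining structurally why Case 2 of Theorem \ref{T:notsubset} must be handled separately from Case 1.
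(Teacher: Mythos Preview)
Your argument is correct. The first half coincides with the paper's proof: both show that if some $m$ lies in the intersection, then the specialized matrix $D(z)=P(\lambda_{ij}^{a_i(m)},z)$ is diagonal for every $z$, by first reducing the off-diagonal entries to constants (using $m\in\bigcap_k\Psi^{\alpha\beta}_k$) and then killing those constants via (\ref{EEE:3}).

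The endgames differ. The paper simply evaluates at $z=a_s(m)$ and $z=a_s(m)+1$: both values give diagonal matrices, so their quotient
\[
\overline{P}(a_s(m))^{-1}\overline{P}(a_s(m)+1)=\left(\gamma_{s+1}^{a_{s+1}(m)}\cdots\gamma_r^{a_r(m)}g\right)^{-1}\gamma_s\left(\gamma_{s+1}^{a_{s+1}(m)}\cdots\gamma_r^{a_r(m)}g\right)
\]
is diagonal, contradicting that $\gamma_s$ is a nontrivial unipotent. Your route instead passes through the identity $\det A(z)\equiv 1$ (a nice consequence of unipotency) to force each diagonal entry of $D(z)$ to be a nonzero constant in $\overline{\mathbb{Q}}[z]$, whence $A(z)\equiv A(0)=I_n$, contradicting $A(1)=\gamma_s\neq I_n$. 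Both arguments exploit the unipotency of $\gamma_s$, but the paper uses it as ``not diagonalizable'' while you use it as ``$\det=1$'' together with ``$\neq I_n$''. The paper's finish is shorter; yours has the minor virtue of making explicit that $A(z)$ itself must collapse to the identity, which is a slightly stronger intermediate conclusion.
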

\begin{proof}
Assume the contrary, and let $m$ be an element of this intersection. Then all off-diagonal entries of the matrix $\overline{P}(z) := P\left(\lambda_{11}^{a_1(m)}, \ldots , \lambda_{rn}^{a_r(m)}, z \right)$ are independent of $z$. Thus, since $$\overline{P}(a_s(m)) = g^{-1} \cdot  \left(\gamma_1^{a_1(m)} \cdots \gamma_s^{a_s(m)} \cdots  \gamma_r^{a_r(m)}\right) \cdot g = g^{-1} \gamma^m g $$ is diagonal, the matrix $\overline{P}(z)$ is diagonal for any $z$. In particular,
$$
\overline{P}(a_s(m) + 1) = g^{-1} \cdot \left(\gamma_1^{a_1(m)} \cdots \gamma_s^{a_s(m) + 1} \cdots \gamma_r^{a_r(m)}\right) \cdot g
$$
is diagonal. Then
$$
\overline{P}(a_s(m))^{-1} \overline{P}(a_s(m) + 1) = \left(\gamma_{s+1}^{a_{s+1}(m)} \cdots \gamma_r^{a_r(m)} g\right)^{-1} \cdot \gamma_s \cdot \left(\gamma_{s+1}^{a_{s+1}(m)} \cdots \gamma_r^{a_r(m)} g\right)
$$
is also diagonal, contradicting the fact that $\gamma_s$ is a {\it nontrivial} unipotent matrix.
\end{proof}

Thus,
$$
J = \bigcup_{\alpha \neq \beta} \left( \bigcup_{k = 1}^{t_{\alpha\beta}} J \setminus \Psi^{\alpha\beta}_k \right),
$$
so there exist $\alpha \neq \beta$ and $k \in \{1, \ldots , t_{\alpha\beta}\}$ such that $J \setminus \Psi^{\alpha\beta}_k$ is infinite. We fix one such pair $(\alpha , \beta)$ and let $t \geq 1$ be the {\it largest} integer $\leq t_{\alpha\beta}$ for which $J \setminus \Psi^{\alpha\beta}_t$ is infinite.
Since $J \setminus \Psi^{\alpha\beta}_k$ is finite for $t < k \leq t_{\alpha\beta}$, the set
$$
J' := \left(J \setminus \Psi^{\alpha\beta}_t\right)  \bigcap \left( \bigcap_{k = t+1}^{t_{\alpha\beta}} \Psi^{\alpha\beta}_k  \right)
$$
is still infinite. Let
$$
\tilde{p}_{\alpha\beta}(x_{ij}, z) = \psi^{\alpha\beta}_{t}(x_{ij}) z^{t} + \cdots + \psi^{\alpha\beta}_0(x_{ij}).
$$
It follows from (\ref{EEE:3}) and our construction that for all $m \in J'$ we have
\begin{equation}\label{EEE:10}
\tilde{p}_{\alpha\beta}\left(\lambda_{11}^{a_1(m)}, \ldots , \lambda_{rn}^{a_r(m)}, a_s(m)\right) = 0 \ \ \text{and} \ \
\psi^{\alpha\beta}_{t}\left(\lambda_{11}^{a_1(m)}, \ldots , \lambda_{rn}^{a_r(m)}\right) \neq 0.
\end{equation}

Let $p_{11}(x_{ij}, z) = \phi_e(x_{ij})z^e + \cdots + \phi_0(x_{ij})$ (this polynomial was denoted $p$ in the proof of Case 1) so that
$$
q(x, x_{ij}, z) = -\phi_e(x_{ij})z^e - \cdots - \phi_1(x_{ij})z + (x - \phi_0(x_{ij})).
$$
Referring to \cite[Ch. IV, \S8]{Langalgebra} for the basic facts about resultants, we consider the resultant of the polynomials $q$ and $\tilde{p}_{\alpha\beta}$ with respect to the variable $z$:
\[
R_z(q,\tilde{p}_{\alpha \beta})= \begin{vmatrix}
-\phi_e(x_{ij}) &\dots & -\phi_1(x_{ij}) & x-\phi_0(x_{ij})&&&\\[16pt]
& \ddots&\ddots& \ddots&\ddots&& \\[16pt]
&&-\phi_e(x_{ij}) &\dots &-\phi_1(x_{ij})&x-\phi_0(x_{ij})\\[16pt]
\psi_t^{\alpha\beta}(x_{ij}) &\psi_{t-1}^{\alpha\beta}(x_{ij})&\dots &\psi_0^{\alpha\beta}(x_{ij}) && & \\[16pt]
&\ddots&\ddots&\ddots& \ddots&& \\[16pt]
&& \psi_t^{\alpha\beta}(x_{ij})&\psi_{t-1}^{\alpha\beta}(x_{ij})&\dots & \psi_0^{\alpha\beta}(x_{ij})&
\end{vmatrix}
\hspace{0.5em}
\begin{tabular}{l}
$\left.\lefteqn{\phantom{\begin{matrix} x-\phi_0(x_{ij})\\ \\  \\ \\ \ddots\\ x-\phi_0(x_{ij})\ \end{matrix}}}\right\}t\text{ rows}$\\
$\left.\lefteqn{\phantom{\begin{matrix} \psi_0^{\alpha\beta}(x_{ij})\\ \\ \\ \\  \ddots\\ \psi_0^{\alpha\beta}(x_{ij})\ \end{matrix}}} \right\}e\text{ rows}$
\end{tabular}
\]
We will view $R_z(q , \tilde{p}_{\alpha\beta})$ as a polynomial $f(x, x_{ij}) \in \overline{\mathbb{Q}}[x, x_{ij}]$. It is easy to see that $\deg_x f = t$ and the coefficient of $x^t$ is $\pm [\psi^{\alpha\beta}_t(x_{ij})]^e$. It follows from (\ref{EEE:2}) and (\ref{EEE:10}) that for any $m \in J'$ the polynomials
$$
q\left(\lambda^m, \lambda_{11}^{a_1(m)}, \ldots , \lambda_{rn}^{a_r(m)}, z\right) \ \ \text{and} \ \ \tilde{p}_{\alpha\beta}\left(\lambda_{11}^{a_1(m)}, \ldots , \lambda_{rn}^{a_r(m)}, z\right)
$$
have a common root $z = a_s(m)$, implying that
$$
f\left(\lambda^m, \lambda_{11}^{a_1(m)}, \ldots , \lambda_{rn}^{a_r(m)}\right) = 0
$$
(cf. \cite[Ch. IV, Prop. 8.1]{Langalgebra}).
It also follows from (\ref{EEE:10}) that $f\left(x, \lambda_{11}^{a_1(m)}, \ldots , \lambda_{rn}^{a_r(m)}  \right)$ is a non-constant polynomial. This means that $J'$ is contained in the set $M$ defined in Proposition \ref{PP:1} for our polynomial $f$ and by taking $\mu = \lambda$ and $\mu_1, \ldots , \mu_d$ to be $\lambda_{11}, \ldots , \lambda_{rn}$. Since $\langle \lambda \rangle \cap \left[\Lambda(\gamma_1) \cdots \Lambda(\gamma_r)\right] = \{ 1 \}$ by assumption, the condition $\langle \mu \rangle \cap \langle \mu_1, \ldots , \mu_d \rangle = \{ 1 \}$ of Proposition \ref{PP:1} holds, allowing us to conclude that the set $M$ is finite. This contradicts the fact that $J'$ is infinite and completes the proof of Theorem~\ref{T:notsubset}. \hfill $\Box$

%\vskip5mm

\section{Proof of the main results}\label{S:main}

\noindent {\it Proof of Theorem \ref{mainthm}.} We need to show that if a linear group $\Gamma \subset \mathrm{GL}_n(K)$ admits a presentation (\ref{E:BG}) where at most one of the $\gamma_i$'s fails to be semi-simple, then $\Gamma$ is virtually solvable. According to Reduction 2.3, it is enough to consider the case where $K$ is a number field. Letting $G$ denote the Zariski-closure of $\Gamma$, we consider the radical $R$ of
the connected component $G^{\circ}$ (which is a normal subgroup of $G$) and the corresponding canonical morphism $\varphi \colon G \to G/R = G'$. Clearly, $G$ and $R$, hence also $G'$ and $\varphi$, are defined over $K$, and in particular, we can choose a faithful $K$-defined representation $G' \hookrightarrow \mathrm{GL}_{n'}$. Since $\varphi$ takes semi-simple elements to semi-simple elements (cf. \cite[Ch. I, 4.4]{borel}), among the elements $\gamma'_i = \varphi(\gamma_i)$ there is at most one non-semi-simple. Furthermore, the subgroup $\Gamma' = \varphi(\Gamma)$ of $G'(K) \subset \mathrm{GL}_{n'}(K)$ has a presentation
$$
\Gamma' = \langle \gamma'_1 \rangle \cdots \langle \gamma'_r \rangle.
$$
If we assume that $\Gamma$ is not virtually solvable, then the connected component of the Zariski-closure $G'$ of $\Gamma'$ will be a {\it nontrivial} semi-simple group. Thus, what we need to show is that if a linear group $\Gamma \subset \mathrm{GL}_n(K)$ over a number field $K$ is such that the connected component $G^{\circ}$ of its Zariski-closure $G$ is a nontrivial semi-simple group, then $\Gamma$ cannot have a presentation (\ref{E:BG}) where all $\gamma_i$, with one possible exception, are semi-simple.

Assume the contrary. Using Proposition \ref{gp}, we can find a semi-simple element $\gamma \in \Gamma \cap G^{\circ}$ whose eigenvalues are multiplicatively independent from those of the elements $\gamma_1, \ldots , \gamma_r$ in (\ref{E:BG}) and for which the subgroup $\Lambda(\gamma)$ is nontrivial and torsion-free. In particular, $\gamma$ has an eigenvalue $\lambda$ which is not a root of unity and for which $\langle \lambda \rangle \cap \left[ \Lambda(\gamma_1) \cdots \Lambda(\gamma_r) \right] = \{ 1 \}$. Then according to Theorem \ref{T:notsubset} we have $\langle \gamma \rangle \not\subset \langle \gamma_1 \rangle \cdots \langle \gamma_r \rangle$, which obviously contradicts (\ref{E:BG}). \hfill $\Box$

\vskip2mm

We would like to point out that as proved in \cite{Vsem}, every matrix in $\Gamma_p = \mathrm{SL}_2(\mathbb{Z}[1/p])$ (where $p$ is a prime) is a product of  $\leq 5$ elementaries, while the example given in \cite[\S 5]{MRS} demonstrates that for $p > 7$ there are matrices in $\Gamma_p$ that are not products of 4 elementaries. These facts suggest that it may be possible to upgrade Theorem \ref{mainthm} to a statement that for a non-virtually solvable linear group $\Gamma$ over a field of characteristic zero, any presentation (\ref{E:BG}) must involve at least 5 non-semi-simple elements, with 5 being the best possible bound.

\vskip2mm

\noindent {\it Proof of Corollary \ref{C:1}.} Let $\Gamma \subset \mathrm{GL}_n(K)$ be an anisotropic linear group over a field $K$ of characteristic zero that has bounded generation. Then $\Gamma$ is boundedly generated by semi-simple elements, hence virtually solvable by Theorem \ref{mainthm}. So, if we let $G$ denote the Zariski-closure of $\Gamma$, then the connected component $G^{\circ}$ is solvable. Let $U$ be the unipotent radical of $G^{\circ}$; then the quotient $G^{\circ}/U$ is a torus (cf. \cite[Ch. III, Theorem 10.6]{borel}). On the other hand, since $\Gamma$ is anisotropic, the restriction  of the quotient map $G^{\circ} \to G^{\circ}/U$ to $\Gamma \cap G^{\circ}$ is injective, so the latter is isomorphic to a subgroup of the abelian group $(G^{\circ}/U)(K)$. Thus, $\Gamma \cap G^{\circ}$ is abelian, making $\Gamma$ virtually abelian. The finite generation of $\Gamma$ is obvious. Conversely, if $\Gamma$ is finitely generated and has an abelian subgroup $\Delta$ of finite index, then $\Delta$ itself is finitely generated, hence has bounded generation, implying that $\Gamma$ has bounded generation as well. \hfill $\Box$

\vskip2mm

\noindent {\it Proof of Theorem \ref{zdlx}.} It is well-known and easy to show that a finite index subgroup of a group has bounded generation if and only if the group does. (Incidentally, this implies that if one $S$-arithmetic subgroup of an algebraic group has bounded generation then all $S$-arithmetic subgroups do.) So, passing to the connected component, we may assume $G$ to be connected. Let $\varphi \colon G \to D$ be the quotient map to the semi-simple group $D = G/R$ (everything is defined over $K$). Then for any semi-simple subgroup $\mathscr{H}$ of $G$, the restriction $\varphi \vert \mathscr{H}$ is an isogeny. This means that if $\mathscr{H}$ is a $K$-defined semi-simple $K$-anisotropic subgroup of $G$ with $\mathscr{H}_S$ noncompact then $H = \varphi(\mathscr{H})$ is a subgroup of $D$ with similar properties. At the same time, if $\Delta \subset G(K)$ is an $S$-arithmetic subgroup  having bounded generation then $\varphi(\Delta)$ is an $S$-arithmetic subgroup of $D(K)$ (cf. \cite[Theorem 5.9]{PR}) that also has bounded generation. Thus, replacing $G$ by $D$, we may assume that $G$ is semi-simple and $H$ is a normal subgroup of $G$ having the properties specified in the statement of the theorem. Then there is a surjective $K$-defined morphism $\psi \colon G \to \overline{H}$ where $\overline{H}$ is the adjoint group of $H$. Furthermore, there is a surjective $K$-defined morphism $\eta \colon \overline{H} \to H'$ to a $K$-simple group $H'$ with noncompact $H'_S$. Then, again, given an $S$-arithmetic subgroup $\Delta$ of $G(K)$ having bounded generation, the image $\Delta' = (\eta \circ \psi)(\Delta)$ is an $S$-arithmetic subgroup of $H'(K)$ also having bounded generation. Since $H$ is $K$-anisotropic, so is $H'$, implying that $\Delta'$ is an anisotropic linear group. Thus, according to Corollary \ref{C:1}, the group $\Delta'$ must be virtually abelian. On the other hand, by Borel's Density Theorem for $S$-arithmetic groups (cf. \cite[Theorem 4.10]{PR} for usual arithmetic subgroups), $\Delta'$ is Zariski-dense in $H'$. However, being a connected group that coincides with its derived subgroup, $H'$ cannot have a Zariski-dense virtually abelian subgroup. A contradiction, proving the theorem. \hfill $\Box$

\vskip1mm

To prove Corollary \ref{negative}, we need to recall that there are series of $S$-arithmetic subgroups of {\it anisotropic} absolutely almost simple simply connected algebraic groups over number fields, where $S$ is a finite set of valuations of the base field, for which the congruence kernel is known to be finite (cf. \cite[Theorems 3 and 4]{Rap}). It is well-known that these groups are finitely generated (cf. \cite[Theorem 5.11]{PR}), and also residually finite. Furthermore, the profinite completions of these groups have property $(\mathrm{BG})_{\mathrm{pr}}$ of bounded generation as  profinite groups \cite[Theorem 2]{PR-CSP}. At the same time, according to Theorem \ref{zdlx} the groups themselves do not have property $(\mathrm{BG})$ of bounded generation as discrete groups. \hfill $\Box$

\vskip1mm

\section{Final remarks}\label{zhujie}

First, here is an example of a solvable finitely generated linear group without bounded generation which shows that Theorem \ref{mainthm} is not a criterion.

\vskip1mm

\addtocounter{thm}{1}

\noindent {\bf Example \ref{zhujie}.1} \ Let $x$ be a variable. Consider the group
$$
\Gamma = \left\{ \ \left. \left( \begin{array}{cc} x^i & a \\ 0 & x^{-i} \end{array} \right) \ \ \right\vert \ \ i \in \mathbb{Z}, \ \ a \in A := \mathbb{Z}[x , x^{-1}] \ \right\}.
$$
It is solvable and finitely generated. In fact, it is generated by the following three matrices
$$
 t = \left( \begin{array}{cc} x & 0 \\ 0 & x^{-1} \end{array} \right), \ \ u = \left( \begin{array}{cc} 1 & 1 \\ 0 & 1 \end{array}  \right), \ \ \text{and} \ \ v = \left( \begin{array}{cc} 1 & x \\ 0 & 1 \end{array} \right).
$$
To see that these matrices indeed generate $\Gamma$, one observes that $\Gamma = TU$ (semi-direct product) where $T = \langle t \rangle$ and $U = \left\{ \left. \left( \begin{array}{cc} 1 & a \\ 0 & 1 \end{array} \right) \ \right\vert \ a \in A \right\}$. Then our claim follows from the relations
$$
t^i u t^{-i} = \left( \begin{array}{cc} 1 & x^{2i} \\ 0 & 1 \end{array}  \right) \ \ \text{and} \ \ t^i v t^{-i} = \left( \begin{array}{cc} 1 & x^{2i+1} \\ 0 & 1 \end{array}   \right).
$$
In the preliminary versions of this paper (see arXiv:2101.09386, v. 1 and 2), the fact that the group $\Gamma$, which strongly resembles the lampligher groups, does not have bounded generation was verified by a direct computation. Subsequently, D.~Segal, B.~Sury and both anonymous referees suggested to replace these computations with references to some general results, which we have gladly implemented. First, Corollary 1.5 in Pyber-Segal \cite{PySe} states that a finitely generated residually finite virtually solvable group has bounded generation if and only if it has finite rank\footnote{This means that there exists an integer $r > 0$ such that every finitely generated subgroup can be generated by $\leq r$ elements}. Since $\Gamma$ is a finitely generated linear group, it is automatically residually finite, and is  also solvable. On the other hand, the subgroup $U$ above obviously has infinite rank. Thus, $\Gamma$ also has infinite rank, hence is not boundedly generated. (We note that since $\Gamma$ is linear, instead of {\it loc. cit.} it is enough to quote Corollary 2 in \cite{Segal2}.) Second, Theorem 1.1 of Nikolov and Sury \cite{NikSury} states that the wreath product $A \wr B$ of nontrivial groups $A$ and $B$ has bounded generation if and only if $A$ has bounded generation and $B$ is finite. It is easy to see that $\Gamma$ can be written as the wreath product $(\mathbb{Z}
\oplus \mathbb{Z}) \wr \mathbb{Z}$, hence does not have (BG) according to this criterion.

\vskip1mm

For comparison, we recall that a solvable group of {\it integral matrices} is polycyclic (see \cite[p. 26]{Segal}), hence has bounded generation. Furthermore,
every virtually solvable {\it anisotropic} linear group is virtually abelian, and therefore automatically has bounded generation in case it is finitely generated.

\vskip1mm

One of the referees of the present paper suggested to consider finitely generated anisotropic linear groups that can be factored as a product of finitely many abelian (rather than cyclic) subgroups and asked if one can prove the assertion of Corollary \ref{C:1} in this more general situation. In order to provide the affirmative answer to this question, we first prove the following statement of independent interest.

\begin{prop}\label{P:abelian-sub}
Let $\Gamma \subset \mathrm{GL}_n(K)$ be a finitely generated linear group. Then any commutative \emph{anisotropic} subgroup $\Delta \subset \Gamma$ is finitely generated.
\end{prop}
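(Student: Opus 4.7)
My plan is to exploit simultaneous diagonalizability to identify $\Delta$ (up to finite index) with a subgroup of the unit group of a finitely generated $\Z$-algebra, and then invoke the classical fact that such unit groups are finitely generated. To begin, since $\Gamma$ is finitely generated, I pick a finitely generated $\Z$-subring $R \subset K$ containing the entries of a finite generating set of $\Gamma$ together with their inverses, so that $\Delta \subset \Gamma \subset \GL_n(R)$. Let $H$ be the Zariski closure of $\Delta$ in $\GL_n$: it is $K$-defined and commutative, and since $\Delta$ is Zariski-dense in $H$ while $H/H^{\circ}$ is finite, the intersection $\Delta \cap H^{\circ}$ is automatically Zariski-dense in $H^{\circ}$. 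I claim that $T := H^{\circ}$ is a torus: the Jordan decomposition of a commutative connected linear algebraic group in characteristic zero yields $T_{\overline{K}} = (T_s)_{\overline{K}} \times (T_u)_{\overline{K}}$, and because $\Delta \cap T$ is Zariski-dense in $T$ and consists only of semi-simple elements, the unipotent factor $(T_u)_{\overline{K}}$ must vanish.

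Next, let $L/K$ be a finite extension over which $T$ splits. The faithful representation $T \hookrightarrow \GL_n$ then decomposes over $L$ as a sum of characters $\psi_1, \ldots, \psi_n \in X(T)$ (with multiplicity), yielding a closed immersion $\Psi = (\psi_1, \ldots, \psi_n) \colon T_L \hookrightarrow \mathbb{G}_{m,L}^n$. For $\delta \in \Delta \cap T \subset \GL_n(R)$, the numbers $\psi_j(\delta)$ are precisely the eigenvalues of $\delta$; as roots of the monic characteristic polynomial of $\delta$, which lies in $R[t]$, they are integral over $R$, and since $\delta^{-1} \in \GL_n(R)$ the reciprocals $\psi_j(\delta)^{-1}$ are integral over $R$ as well. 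Denoting by $R_L$ the integral closure of $R$ in $L$, I thus obtain an injective homomorphism
\[
\Psi \colon \Delta \cap T \hookrightarrow (R_L^{\times})^n.
\]

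To conclude, I observe that $R_L^{\times}$ is finitely generated: since $R$ is a finitely generated $\Z$-algebra it is Japanese (in fact excellent), so $R_L$ is a finite $R$-module and hence itself a finitely generated $\Z$-algebra, after which the classical theorem of Samuel (also attributed to Bass--Lazard--Serre) on units of finitely generated integral $\Z$-algebras gives the finite generation of $R_L^{\times}$. Consequently $(R_L^{\times})^n$ is a finitely generated abelian group, its subgroup $\Delta \cap T$ is finitely generated, and the finite-index relation $[\Delta : \Delta \cap T] \leq [H : H^{\circ}] < \infty$ forces $\Delta$ itself to be finitely generated. The step that I expect to require the most care is the verification that the eigenvalue-coordinates of matrices in $\Delta \cap T$ actually land in $R_L^{\times}$ rather than merely in $L^{\times}$, together with the correct citations for the Japanese property of $R$ and for the finite generation of $R_L^{\times}$; once those are in place, the remainder of the argument is formal.
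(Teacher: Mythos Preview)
Your argument is correct in spirit and ultimately rests on the same ingredient as the paper's proof---Samuel's theorem that the unit group of a finitely generated reduced $\Z$-algebra is finitely generated---but the route is genuinely different. The paper never passes to a splitting field or forms an integral closure: instead it works with the commutative $K$-subalgebra $A \subset \mathrm{M}_n(K)$ generated by $\Delta$, observes that $A$ is reduced (simultaneous diagonalizability over $\overline{K}$), chooses a $K$-basis $\omega_1,\ldots,\omega_r$ with structure constants $a_{ij}^k$, and for a finitely generated ring $R' \subset K$ containing $R$, the $a_{ij}^k$, and the coefficients expressing the $\omega$-coordinates in terms of matrix entries, shows directly that $\Delta \subset \mathscr{A}(R')^{\times}$ where $\mathscr{A}(R') = \sum R'\omega_k$. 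This keeps everything inside $K$, avoids any excellence/Japanese machinery, and (as the paper notes) goes through verbatim in positive characteristic, whereas your decomposition $T = T_s \times T_u$ uses characteristic zero.

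One point in your write-up does need repair, and it is not quite the one you flagged. You take $L$ finite over $K$, but the Japanese property only guarantees that the integral closure of $R$ is module-finite in finite extensions of $\mathrm{Frac}(R)$, and $K$ need not be algebraic over $\mathrm{Frac}(R)$ (e.g.\ if $K=\overline{\Q}$ and $R=\Z$, the integral closure of $R$ in $K$ is all of $\overline{\Z}$). The fix is immediate: since $\Delta \subset \GL_n(R) \subset \GL_n(\mathrm{Frac}(R))$, the Zariski closure $H$ and the torus $T=H^\circ$ are already defined over $\mathrm{Frac}(R)$, so you may take $L$ to be a finite extension of $\mathrm{Frac}(R)$ over which $T$ splits; then your appeal to excellence is legitimate and the rest of the argument goes through unchanged.
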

\begin{proof}
Since $\Gamma$ is finitely generated, one can find a finitely generated subring $R \subset K$ such that $\Gamma \subset \mathrm{GL}_n(R)$. Let $A$ be the $K$-subalgebra of the matrix algebra $\mathrm{M}_n(K)$ generated by $\Delta$. Since $\Delta$ is commutative and consists of semi-simple elements, $A$ is conjugate over an algebraic closure of $K$ to a subalgebra of the algebra of diagonal matrices, and in particular is \emph{reduced} (i.e., does not contain any  nonzero nilpotent elements). Now, fix a basis $\omega_1, \ldots , \omega_r$ of $A$ over $K$, and consider the corresponding structure constants $a_{ij}^k$ defined by the equations
$$
\omega_i \omega_j = \sum_{k = 1}^r a_{ij}^k \omega_k, \ \ i,j = 1, \ldots , r.
$$
Then for \emph{any} finitely generated subring $R' \subset K$ containing all the $a_{ij}^k$'s, the sum
$$
\mathscr{A}(R') := \sum_{k = 1}^r R' \omega_k
$$
is a finitely generated subring of $A$. Since $A$ is reduced, it follows from \cite{Samuel} that the group of units $\mathscr{A}(R')^{\times}$ is finitely generated.

For an element $a \in A$, we let $a_1, \ldots , a_r$ denote its coordinates with respect to the basis $\omega_1, \ldots , \omega_r$ of $A$ over $K$, and $a'_{11}, \ldots a'_{nn}$ its coordinates with respect to the standard basis $e_{11}, \ldots , e_{nn}$ of $\mathrm{M}_n(K)$. Then there exist linear functions $\lambda_1(x_{11}, \ldots , x_{nn}), \ldots , \lambda_r(x_{11}, \ldots , x_{nn})$ over $K$ such that $$a_i = \lambda_i(a'_{11}, \ldots , a'_{nn}) \ \  \text{for all} \ \ a \in A.$$
Let $R'$ be the (finitely generated) subring of $K$ obtained by adjoining to $R$ the structure constants $a_{ij}^k$ and the coefficients of the linear functions $\lambda_1, \ldots , \lambda_r$. Then it follows from our construction that the fact that $\Delta \subset \mathrm{GL}_n(R)$ yields the inclusion $\Delta \subset \mathscr{A}(R')^{\times}$ in the above notations. However, as we mentioned above, $\mathscr{A}(R')^{\times}$ is a finitely generated abelian group, and the finite generation of $\Delta$ follows.
\end{proof}

We note that the above argument proves a slightly more general fact: Let $K$ be a field. Then for any finitely generated subring $R \subset K$, a commutative subgroup $\Delta \subset \mathrm{GL}_n(R)$ consisting of semi-simple elements is finitely generated. In particular, for any $K$-torus $T \subset \mathrm{GL}_n$, the group of $R$-points $T(R) := T \cap \mathrm{GL}_n(R)$ is finitely generated. (It is worth pointing out that these results are valid in any characteristic.) 

Now, let $\Gamma \subset \mathrm{GL}_n(K)$ be a finitely generated anisotropic linear group over a field $K$ of characteristic zero. Then it follows from Proposition \ref{P:abelian-sub} that every abelian subgroup $\Delta \subset \Gamma$ is finitely generated, hence has bounded generation. Thus, if $\Gamma$ is a product of finitely many abelian subgroups then it actually has bounded generation. So, invoking Corollary \ref{C:1} we obtain the following result that confirms  the referee's expectation.

\begin{cor}
Let $\Gamma \subset \mathrm{GL}_n(K)$ be a finitely generated anisotropic linear group over a field $K$ of characteristic zero. If $\Gamma$ is a product of finitely many abelian subgroups then $\Gamma$ is virtually abelian.
\end{cor}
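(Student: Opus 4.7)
The plan is to reduce the corollary to Corollary \ref{C:1} by verifying that $\Gamma$ admits bounded generation. Write the given factorization as $\Gamma = \Delta_1 \cdots \Delta_m$, where each $\Delta_i \subset \Gamma$ is an abelian subgroup. Since $\Gamma$ is anisotropic, every element of $\Gamma$ (hence of each $\Delta_i$) is semi-simple, so each $\Delta_i$ is itself a commutative anisotropic subgroup of the finitely generated linear group $\Gamma \subset \mathrm{GL}_n(K)$. The first step is therefore to apply Proposition \ref{P:abelian-sub} to each $\Delta_i$ in turn, concluding that each $\Delta_i$ is finitely generated.

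Next, since each $\Delta_i$ is a finitely generated abelian group, the structure theorem for such groups writes it as a product of finitely many cyclic subgroups, say $\Delta_i = \langle \delta_{i,1}\rangle \cdots \langle \delta_{i,k_i}\rangle$. Substituting these decompositions into $\Gamma = \Delta_1 \cdots \Delta_m$ yields a presentation of $\Gamma$ as a product of finitely many cyclic subgroups, i.e.\ a factorization of the form \eqref{E:BG}. Hence $\Gamma$ has bounded generation.

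To finish, I would invoke Corollary \ref{C:1}: an anisotropic linear group over a field of characteristic zero with property (BG) is finitely generated and virtually abelian. Since our $\Gamma$ is anisotropic and has (BG), this immediately gives that $\Gamma$ is virtually abelian, as claimed.

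There is no serious obstacle here; the entire argument is a short bookkeeping exercise once Proposition \ref{P:abelian-sub} is in hand. The only point that deserves care is confirming that the hypotheses of Proposition \ref{P:abelian-sub} are truly met by each factor $\Delta_i$ — namely commutativity (given) and semi-simplicity of all elements (inherited from the anisotropy of the ambient group $\Gamma$) — together with the fact that $\Delta_i$ sits inside the finitely generated linear group $\Gamma$. Once this is noted, the reduction to Corollary \ref{C:1} is mechanical.
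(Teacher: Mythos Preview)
Your proof is correct and follows essentially the same route as the paper: apply Proposition~\ref{P:abelian-sub} to see that each abelian factor $\Delta_i$ is finitely generated (hence a finite product of cyclic groups), conclude that $\Gamma$ has bounded generation, and then invoke Corollary~\ref{C:1}. The paper's argument is phrased slightly more tersely but is identical in substance.
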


It is known that every linear group over a field of positive characteristic that has bounded generation is virtually abelian \cite[Theorem 1]{Abert}. So, using Proposition \ref{P:abelian-sub}, we conclude that if an anisotropic linear group over a field of positive characteristic is a product of finitely many abelian subgroups then just as in the case of characteristic zero it must be virtually abelian.

\vskip1mm

Next, the lack of bounded generation in non-virtually abelian anisotropic linear groups casts new light on the problem of bounded generation of free products
with amalgamation. Let $\Gamma = \Gamma_1 *_{\Gamma_0} \Gamma_2$ be a free amalgamated product of two groups $\Gamma_1$ and $\Gamma_2$ along a common subgroup $\Gamma_0$. It was shown in \cite{F2} and \cite{Gr2} that if the number of double cosets $\Gamma_0 \backslash \Gamma_i / \Gamma_0$ is $> 2$ for at least one $i \in \{ 1 , 2\}$ then $\Gamma$ does not have bounded generation. On the other hand, the group $\Gamma = \mathrm{SL}_2\left(\mathbb{Z}\left[ \frac{1}{p} \right] \right)$ ($p$ a prime), which is an amalgamated product \cite[Ch. 2, \S 1.4, Corollary 2]{Serre-Trees} does have bounded generation  \cite{MRS}, \cite{Vsem}.  (More precisely, in this case $\Gamma$ admits a presentation $\Gamma = \Gamma_1 *_{\Gamma_0} \Gamma_2$ where both $\Gamma_1$ and $\Gamma_2$ are isomorphic to $\mathrm{SL}_2(\mathbb{Z})$, and $\Gamma_0$ is identified with the subgroup $\Gamma_0(p)$ consisting of matrices $\left( \begin{array}{cc} a & b \\ c & d \end{array} \right)$ satisfying $c \equiv 0 (\mathrm{mod}\: p)$, so the fact that $\vert \Gamma_0 \backslash \Gamma_i / \Gamma_0 \vert = 2$ for $i = 1, 2$ follows from the Bruhat decomposition for $\mathrm{SL}_2$ over $\mathbb{F}_p = \mathbb{Z}/p\mathbb{Z}$.) Nevertheless, if $\mathbb{H}$ is the division algebra of usual quaternions (corresponding to the pair $(-1 , -1)$) over $\mathbb{Q}$ and $G = \mathrm{SL}_{1 , \mathbb{H}}$ is the associated norm 1 group, then by Theorem \ref{zdlx} the group $\Delta = G(R)$ of points over the ring $R = \mathbb{Z}\left[ \frac{1}{p} , \frac{1}{q} \right]$, where $p$ and $q$ are distinct odd primes, does not have bounded generation. Surprisingly, this happens despite the fact that $\Delta$ shares many
group-theoretic properties with $\Gamma$, viz. $\Delta$ has a presentation $\Delta_1 *_{\Delta_0} \Delta_2$ where $\Delta_1 , \Delta_2$ are virtually free groups with $\Delta_0$ having a description very similar to that of $\Gamma_0$; both $\Gamma$ and $\Delta$ do not have noncentral normal subgroups of infinite index; both $\Gamma$ and $\Delta$ are $SS$-rigid (in fact, super-rigid). The true reason behind the fundamental distinction between $\Gamma$ and $\Delta$ as far as bounded generation is concerned remains elusive at this point. So, we would like to propose the following.

\vskip2mm

\noindent {\bf Problem I.} {\it Give a criterion or at least a verifiable and general enough sufficient condition for a free amalgamated product $\Gamma = \Gamma_1 *_{\Gamma_0} \Gamma_2$ to have bounded generation.}

\vskip2mm

Of course, one is particularly interested in a sufficient condition that would explain bounded generation of  $\mathrm{SL}_2\left(\mathbb{Z}\left[ \frac{1}{p} \right] \right)$ from the group-theoretic perspective, so the case where $\Gamma_1$ and $\Gamma_2$ are (virtually) free groups is of special significance. On the other hand, our Theorem \ref{zdlx} seems to suggest that when $\Gamma_1$ and $\Gamma_2$ are both surface groups, and a common subgroup $\Gamma_0 \neq \Gamma_1, \Gamma_2$ has index $\geq 3$ in at least one of them, the amalgamated product $\Gamma = \Gamma_1 *_{\Gamma_0} \Gamma_2$ never has bounded generation.

\vskip1mm

Speaking about linear groups, one can ask if all linear groups over a field of characteristic zero that have bounded generation can be obtained by some natural operations from $S$-arithmetic groups. A more specific question is whether a linear group which is a nontrivial amalgamated product and has bounded generation at the same time must be $S$-arithmetic?

\vskip1mm

As we have already mentioned in the introduction, bounded generation of $S$-arithmetic subgroups of absolutely almost simple algebraic groups, under some natural assumptions, implies the congruence subgroup property. So, the study of bounded generation in this context was seen as a new approach to Serre's Congruence Subgroup Conjecture \cite{Serre-CSP}, particularly for anisotropic groups where one cannot use unipotent elements. Since now we know that infinite $S$-arithmetic subgroups of anisotropic absolutely almost simple algebraic groups are not boundedly generated, one may wonder about the possible ways to modify this strategy. In this regard, we would like to point out that it was shown in \cite{PR-CSP} that under the same natural assumptions, the congruence subgroup property follows from a weaker property of polynomial index growth for an $S$-arithmetic subgroup $\Gamma$:

\vskip2mm

\noindent (PIG) \parbox[t]{15cm}{there exists positive constants $c$ and $k$ such that for any integer $n > 0$ the subgroup $\Gamma^{(n)}$ generated by the $n$-th powers of elements of $\Gamma$ has finite index in $\Gamma$ bounded by $cn^k$,}

\vskip2mm

\noindent or even its weaker version

\vskip2mm

\noindent (PIG)$'$ \parbox[t]{15cm}{for any $n > 0$, the subgroup $\Gamma^{(n)}$ is of finite index in $\Gamma$ and for fixed $n$ and a prime $p$ there exist
$c , k > 0$ such that $[\Gamma : \Gamma^{(np^{\alpha})}] \leq cp^{k\alpha}$ for all $\alpha > 0$.}

\vskip2mm

\noindent Furthermore, it was shown in \cite[\S 7]{PR-CSP} that condition (PIG)$'$ for $\mathrm{SL}_n(\mathbb{Z})$, $n \geq 3$, can be verified by quite straightforward computations that rely only on the well-known commutator identities for elementary matrices. So,  it may be realistic to verify (PIG)$'$ in other situations.

\vskip2mm

\noindent {\bf Problem II.} {\it Let $G$ be an absolutely almost simple algebraic group over a number field $K$, and $S$ be a finite set of places of $K$ containing all archimedean ones. Prove that if the $S$-rank $\displaystyle \mathrm{rk}_S\: G := \sum_{v \in S} \mathrm{rk}_{K_v}\: G$ is $\geq 2$, then $S$-arithmetic subgroups of $G$ satisfy condition \rm{(PIG)}$'$.}

\vskip2mm

\noindent We recall that according to Margulis' Normal Subgroup Theorem \cite[Ch. VIII]{margulis}, any noncentral normal subgroup of a higher rank $S$-arithmetic subgroup $\Gamma$ as above, hence in particular the subgroup $\Gamma^{(n)}$ for any $n > 0$, automatically has finite index.

\vskip1mm

We  conclude with one more problem which is at the meeting ground of Problems I and II.

\vskip2mm

\noindent {\bf Problem III.} {\it Give a criterion or at least a verifiable sufficient condition for a free amalgamated product $\Gamma = \Gamma_1 *_{\Gamma_0} \Gamma_2$ to satisfy condition \rm{(PIG)}$'$.}

\vskip2mm

We note that a number of results relating (BG), (PIG) and some other finiteness conditions in various situations were obtained in \cite{PySe}.

%\vskip5mm

\vskip2mm

\noindent {{\small {\bf Acknowledgements.} We are grateful to H.~Abels,  D.~Segal, G.~Tomanov and the two anonymous referees for their comments and suggestions that helped to improve the exposition.}}

\bibliographystyle{amsplain}

\end{document}